\theoremstyle{plain}
\newtheorem {theorem} {Theorem}[section]
\theoremstyle{definition}
\newtheorem {definition}[theorem]{Definition}
\newtheorem {remark}[theorem]{Remark}
\newtheorem {example}[theorem]{Example}
\begin{document}

\title{Conditions of fixed sign for {\normalsize $n\times n$} operator matrices}
\author{I.V. Orlov and E.V. Bozhonok}
\maketitle

\begin{abstract}
A positive definiteness criterion and, under the additional
conditions, a nonnegativity criterion for a self-adjoint
continuous operator matrix, acting in product of an arbitrary
number of real separable Hilbert spaces, are obtained. As
application, both sufficient and necessary analytical conditions
of functional extremum of several Hilbert variables are
considered.
\\\textit{Mathematics subject classification} (2000): 47A63, 47A07.
\\\textit{Key words and phrases}: Operator matrix, Hilbert space,
positive definiteness criterion, nonnegativity criterion, Schur
complements.
\end{abstract}

\section{\textbf{Introduction}}

The conditions of fixed sign for the operator matrices acting in a
product of Hilbert spaces play a significant role in operator
theory and its numerous applications. Among the base results
respective to ${2\times 2}$ matrix, note the results by M.G. Krein
and Ju.L. Shmulian~(\cite{1},~\cite{2}). In the various applied
aspects, the conditions of fixed sign for ${2\times 2}$ operator
matrices arose in the works by V.P. Potapov~\cite{4}, T.Ya. Azizov
and I.S. Iohvidov~\cite{11}, N.D. Kopachevskii and S.G.
Krein~(\cite{40}--\cite{10}), J. Mandel~\cite{7}. The specific
direction, Schur analysis~\cite{5} using operator Schur
complements, appeared later (see survey by A. Bultheel and K.
M\"{u}ller~\cite{6}), the paper of Y. Arlinskii~\cite{13}).

Among the recent works in this area, distinguish the results of S.
Hassi, M. Malamud, H. de Snoo~\cite{123} and T.Ya. Azizov and V.A.
Khatskevich~\cite{8}.

However, the most of the works above is connected with ${2\times
2}$ operator matrices. In the present work we are interested in a
complete description of the positive definite and nonnegative
operator matrices, including the suitable explicit conditions for
${3\times 3}$ operator matrices.

In the paper, the self-adjoint linear continuous operator matrices
acting in a product of an arbitrary number of the real separable
Hilbert spaces are studied. In the first item, a positive
definiteness criterion in terms of the first kind Schur operators
(Theorem~\ref{Th 8.2}) is obtained and the precise number of the
corresponding inequalities is calculated (Theorem~\ref{Th 8.2.2}).
In the second item, under the additional conditions, a
nonnegativity criterion in term of the second kind Schur operators
(Theorem~\ref{Th 8.4}) is obtained. In the third item,
applications of the results above to determination of the
functional extrema in a product of several Hilbert spaces are
investigated.

Throughout the paper, the case ${n=3}$ is described explicitly
(Examples~\ref{Exam. 1} and~\ref{Exam. 1.2}, Theorem~\ref{th.2.12}
and~\ref{th.2.12.1}). In the conclusion, an example of
determination of functional extremum in the case of ${n=3}$ and
non-commuting second partial derivatives is considered.

\section{\textbf{Positive definiteness criterion
for self-adjoint operator matrix in product of
$\textit{\textbf{n}}$ Hilbert spaces}}\label{s12.1}

\quad

We are based on the well known criterion for $2\times 2$ matrix by
means of Schur complements~\cite{11}.

\begin{theorem}\label{oiv.lem.2.8}
Let $H_{1}$, $H_{2}$ be separable real Hilbert spaces,
${B_2=(B_{ij}:H_{j}\rightarrow H_{i})_{i,j=1}^2}$ is a linear
self-adjoint continuous operator in ${H_1\times H_2}$
(${B_{11}=B_{11}^*}$, ${B_{22}=B_{22}^*}$, ${B_{12}=B_{21}^*}$).
Then $B_{2}$ is positive definite (${B_2\gg 0}$) if and only if
the following conditions are fulfilled:
\begin{enumerate}
\item [i)] ${B_{11}\gg 0}$, ${B_{22}\gg 0}$;

\item [ii)] ${\bigtriangleup_{2}^{1}(B_2):=B_{11}-B_{12}\cdot
  B_{22}^{-1}\cdot B_{21}\gg 0}$,
  ${\bigtriangleup_{1}^{2}(B_2):=B_{22}-B_{21}\cdot
  B_{11}^{-1}\cdot B_{12}\gg 0}$.
\end{enumerate}
\end{theorem}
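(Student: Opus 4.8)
The plan is to reduce the positive definiteness of $B_2$ to that of a block-diagonal operator via the Frobenius--Schur factorization, exploiting the fact that congruence by a boundedly invertible operator preserves the relation $\gg 0$. I would first record two elementary facts. (a) A self-adjoint operator $A$ with $A \gg 0$ is boundedly invertible; in particular, whenever condition (i) holds, the Schur complements $\bigtriangleup_{2}^{1}(B_2)$ and $\bigtriangleup_{1}^{2}(B_2)$ are well defined. (b) If $S$ is a boundedly invertible operator and $A \gg 0$, then $S^*AS \gg 0$, since $\langle S^*AS\,x,x\rangle = \langle A(Sx),Sx\rangle \ge \varepsilon\|Sx\|^2 \ge \varepsilon\|S^{-1}\|^{-2}\|x\|^2$.

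For the necessity, assume $B_2 \gg 0$. Testing the quadratic form of $B_2$ on the vectors $(x_1,0)$ and $(0,x_2)$ at once gives $B_{11}\gg 0$ and $B_{22}\gg 0$, which is (i). In particular $B_{22}$ is boundedly invertible, and I would verify the identity
\[
B_2 = \begin{pmatrix} I & B_{12}B_{22}^{-1}\\ 0 & I\end{pmatrix}
\begin{pmatrix}\bigtriangleup_{2}^{1}(B_2) & 0 \\ 0 & B_{22}\end{pmatrix}
\begin{pmatrix}I & 0\\ B_{22}^{-1}B_{21} & I\end{pmatrix},
\]
whose outer factors are mutually adjoint (using $B_{22}^*=B_{22}$, $B_{12}^*=B_{21}$) and boundedly invertible, their inverses being the same triangular matrices with the off-diagonal entry negated. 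By (b), $B_2 \gg 0$ forces the middle factor to be $\gg 0$, hence $\bigtriangleup_{2}^{1}(B_2)\gg 0$; the symmetric factorization through $B_{11}^{-1}$ gives $\bigtriangleup_{1}^{2}(B_2)\gg 0$, which establishes (ii).

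For the sufficiency, assume (i) and (ii). Since $B_{22}\gg 0$ is boundedly invertible, the displayed factorization is available; its middle factor is block-diagonal with diagonal entries $\bigtriangleup_{2}^{1}(B_2)\gg 0$ and $B_{22}\gg 0$, hence is itself $\gg 0$, and applying (b) with $S$ the right triangular factor yields $B_2 \gg 0$. (In fact (i) together with just one of the two Schur complements already suffices here; both are genuinely needed only in the necessity direction, where the statement is an equivalence.)

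I do not anticipate a real obstacle: the heart of the argument is the algebraic factorization identity together with the remark that a boundedly invertible congruence preserves $\gg 0$. The points requiring care are checking that the block products expand correctly, that the outer factors are genuinely invertible with bounded inverses, and --- the only properly delicate bookkeeping --- tracking the uniform lower bounds throughout so as to obtain $\gg 0$ and not merely $\ge 0$. An alternative route, completing the square in the quadratic form $\langle B_{11}x_1,x_1\rangle + 2\langle B_{12}x_2,x_1\rangle + \langle B_{22}x_2,x_2\rangle$ by the substitution that cancels the cross term, leads to exactly the same decomposition and may be used to bypass the block-matrix manipulations.
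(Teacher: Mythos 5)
Your proof is correct. Note, however, that the paper does not prove this statement at all: it is quoted as a known criterion with a reference to Azizov--Iohvidov~\cite{11}, so there is no in-paper argument to compare against line by line. Your route --- the Frobenius--Schur factorization
$B_2=\bigl(\begin{smallmatrix} I & B_{12}B_{22}^{-1}\\ 0 & I\end{smallmatrix}\bigr)
\bigl(\begin{smallmatrix}\bigtriangleup_{2}^{1}(B_2) & 0 \\ 0 & B_{22}\end{smallmatrix}\bigr)
\bigl(\begin{smallmatrix}I & 0\\ B_{22}^{-1}B_{21} & I\end{smallmatrix}\bigr)$
together with the observation that congruence by a boundedly invertible operator preserves $\gg 0$ --- is the standard one, and all the delicate points are handled: the outer factors are genuinely adjoint to each other (this uses $B_{12}=B_{21}^*$, $B_{22}=B_{22}^*$), they are invertible triangular operators, and the uniform lower bounds survive the congruence, so you get $\gg 0$ rather than merely $\geq 0$. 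Your closing remark that completing the square in $\langle B_{11}h_1,h_1\rangle+2\langle B_{12}h_2,h_1\rangle+\langle B_{22}h_2,h_2\rangle$ gives the same decomposition is exactly the device the authors do use later, for the nonnegativity analogue (Theorem~\ref{Th 8.3}), via the identity $\langle B_{2}h,h\rangle=\|B_{11}^{1/2}h_{1}+B_{11}^{-1/2}B_{12}h_{2}\|^{2}+\langle\widetilde{\bigtriangleup_{2}^{1}}h_{2},h_{2}\rangle$; so your approach is fully consistent with the paper's machinery. Your parenthetical observation that condition (i) plus a single Schur complement already suffices for the ``if'' direction is also accurate and worth keeping, since it explains why both inequalities in (ii) appear only because the statement is an equivalence stated symmetrically in the two blocks.
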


\begin{remark}
It's easy to check the following relation between Schur
complements:
$$
B_{21}B^{-1}_{11}\bigtriangleup_{2}^{1}=
\bigtriangleup_{1}^{2}B^{-1}_{22}B_{21}\,\,\,\,\mbox{or}\,\,\,\,
B_{12}B^{-1}_{22}\bigtriangleup_{1}^{2}=
\bigtriangleup_{2}^{1}B^{-1}_{11}B_{12}
$$
\end{remark}

Let's consider now a linear continuous operator $B_{n}$ acting in
product of $n$ separable real Hilbert spaces ${H=H_{1} \times
\ldots \times H_{n}}$ and defined by operator matrix
${(B_{ij})^{n}_{i,j=1}}$. First, introduce \textit{Schur operators
of the first kind}.

\begin{definition}\label{Defin 8.1}
Let's divide the matrix $B_{n}$ to four units:
 $B_{n}^{11}$ is main minor of the order ${\left[\frac{n}{2}\right]\times
 \left[\frac{n}{2}\right]}$, $B_{n}^{22}$ is its adjacent minor of the order
 ${\left(n-\left[\frac{n}{2}\right]\right)\times
 \left(n-\left[\frac{n}{2}\right]\right)}$,
 $B_{n}^{12}$ and $B_{n}^{21}$ are corresponding rectangular blocks
 of the orders ${\left[\frac{n}{2}\right]\times
 \left(n-\left[\frac{n}{2}\right]\right)}$ and
 ${\left(n-\left[\frac{n}{2}\right]\right)\times \left[\frac{n}{2}\right]}$, respectively.
 (Here ${[\,\cdot\,]}$ denotes integral part of a number). Suppose
 that the block matrix ${(B^{ij}_n)^{2}_{i,j=1}}$ satisfies
necessary condition of positive definiteness $i)$ from
Theorem~\ref{oiv.lem.2.8} and hence $B_{n}^{11}$ $B_{n}^{22}$ are
continuously invertible.

Let's introduce now four types of {\it Schur operators of the
first kind}:
 $$
  \bigtriangleup_{1}^{1}(B_{n})=B^{11}_{n}; \quad
  \bigtriangleup_{2}^{1}(B_{n})=B^{11}_{n}-B^{12}_{n}\cdot
  (B^{22}_{n})^{-1}\cdot B^{21}_{n};
 $$
 $$
  \bigtriangleup_{2}^{2}(B_{n})=B^{22}_{n}; \quad
  \bigtriangleup_{1}^{2}(B_{n})=B^{22}_{n}-B^{21}_{n}\cdot
  (B^{11}_{n})^{-1}\cdot B^{12}_{n}.
 $$

\end{definition}

Note that the matrices ${\bigtriangleup_{j}^{i}(B_{n})}$
 have maximal order ${\left(\left[\frac{n}{2}\right]+1\right)\times
 \left(\left[\frac{n}{2}\right]+1\right)}$ in case of the odd $n$ and
 ${\left[\frac{n}{2}\right]\times
 \left[\frac{n}{2}\right]}$ in case of the even $n$. Pass to the
 main result.

\begin{theorem}\label{Th 8.2}
Let $H_{i}$ ${(i=\overline{1,n})}$ be separable real Hilbert
spaces, ${H=H_{1} \times \ldots \times H_{n}}$,
 ${B_{n}=(B_{ij})_{i,j=1}^{n}}$ be a linear continuous self-adjoint operator in
 $H$, in the ${B_{ij}\in \mathcal{L}(H_{j},H_{i})}$; ${(i,j=\overline{1,n})}$.
 Then $B_{n}$ is
positive definite if and only if the following system of positive
definiteness inequalities is fulfilled:
 \begin{equation}\label{boz_e8.1}
  \{\bigtriangleup^{i_{m}}_{j_{m}}\ldots
  \bigtriangleup^{i_{2}}_{j_{2}}\bigtriangleup^{i_{1}}_{j_{1}}(B_{n})\gg
  0\} \quad (i_{l},j_{l}=1, 2),
 \end{equation}
where
 \begin{equation}\label{boz_e8.2}
  m=\left\{
  \begin{array}{ccc}
   k, & \mbox {as} & n=2^{k}\\
   k+1, & \mbox {as} & 2^{k}<n<2^{k+1}.\\
  \end{array}
  \right.
 \end{equation}
\end{theorem}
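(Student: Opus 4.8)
The plan is to prove Theorem~\ref{Th 8.2} by induction on $n$, using the $2\times 2$ block decomposition from Definition~\ref{Defin 8.1} to reduce positive definiteness of $B_n$ to positive definiteness of two operator matrices of strictly smaller size, and then iterating. The base case $n=1$ is trivial ($B_1\gg 0 \iff \bigtriangleup_1^1(B_1)=B_1\gg 0$), and the case $n=2$ is exactly Theorem~\ref{oiv.lem.2.8} rewritten in the notation $\bigtriangleup_1^1,\bigtriangleup_2^2,\bigtriangleup_2^1,\bigtriangleup_1^2$. For the inductive step I would first observe that, by Theorem~\ref{oiv.lem.2.8} applied to the block matrix $(B_n^{ij})_{i,j=1}^2$ (whose blocks are honest operators on the Hilbert spaces $H_n^{1}:=H_1\times\cdots\times H_{[n/2]}$ and $H_n^{2}:=H_{[n/2]+1}\times\cdots\times H_n$), one has
\begin{equation}\label{plan:eq1}
B_n\gg 0 \iff \bigtriangleup_2^2(B_n)\gg 0,\ \bigtriangleup_1^1(B_n)\gg 0,\ \bigtriangleup_2^1(B_n)\gg 0,\ \bigtriangleup_1^2(B_n)\gg 0 .
\end{equation}
Here I should note that condition i) of Theorem~\ref{oiv.lem.2.8} ($B_n^{11}\gg 0$, $B_n^{22}\gg 0$) is subsumed: $\bigtriangleup_1^1(B_n)=B_n^{11}$ and $\bigtriangleup_2^2(B_n)=B_n^{22}$, so the four Schur operators of the first kind being positive definite is literally conditions i) and ii) together. (Invertibility of $B_n^{11}$, $B_n^{22}$, needed to form $\bigtriangleup_2^1$ and $\bigtriangleup_1^2$, follows a posteriori from their positive definiteness; when only the necessary part is being checked this is exactly the hypothesis built into Definition~\ref{Defin 8.1}.)

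Next I would apply the induction hypothesis separately to each of the four operator matrices appearing on the right of \eqref{plan:eq1}. Each of them acts in a product of at most $\left[\tfrac n2\right]+1$ Hilbert spaces — indeed $\bigtriangleup_1^1(B_n)=B_n^{11}$ and $\bigtriangleup_1^2(B_n)$ act in the product of the $[n/2]$ (resp. $n-[n/2]$) relevant spaces, and $\bigtriangleup_2^1(B_n)$, $\bigtriangleup_2^2(B_n)$ similarly — so in all cases the number of factors is $\le \left[\tfrac n2\right]+1 < n$ for $n\ge 2$, and the induction hypothesis gives that each of these four is positive definite iff a system of inequalities of the form $\{\bigtriangleup^{i_{m'}}_{j_{m'}}\cdots\bigtriangleup^{i_1}_{j_1}(\,\cdot\,)\gg 0\}$ holds, with $m'$ the corresponding exponent from \eqref{boz_e8.2} applied to that smaller size. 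Prefixing the outer symbol ($\bigtriangleup_1^1$, $\bigtriangleup_2^1$, $\bigtriangleup_2^2$ or $\bigtriangleup_1^2$) to each word in these four systems and taking the union yields precisely the system \eqref{boz_e8.1} for $B_n$; this is essentially a bookkeeping step once the notation is set up correctly (one does have to check that applying a first-kind Schur operator to, say, $\bigtriangleup_2^1(B_n)$ means: split $\bigtriangleup_2^1(B_n)$ again into its own $2\times 2$ blocks by the same rule and form its Schur complements — i.e., the operators $\bigtriangleup^{i}_{j}$ are being composed in the sense of "nested Schur complementation," not operator composition).

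The one genuinely nontrivial point — and the main obstacle — is the bound on the depth $m$, i.e. verifying \eqref{boz_e8.2}. The recursion on the number of factors is $f \mapsto \left[\tfrac f2\right]+1$; starting from $n$ one must count how many steps it takes to reach $1$ (where recursion stops), and show this count is $k$ when $n=2^k$ and $k+1$ when $2^k<n<2^{k+1}$. For $n=2^k$ the map sends $2^j\mapsto 2^{j-1}+1$, which is then $\le 2^{j-1}$... wait, $2^{j-1}+1 > 2^{j-1}$, so one must track the "$+1$" carefully; the clean way is to prove by a separate small lemma (or by the monotonicity $f\le g \Rightarrow \left[\tfrac f2\right]+1\le\left[\tfrac g2\right]+1$) that the iterate of $f\mapsto\left[\tfrac f2\right]+1$ starting at $n$ reaches $1$ in exactly $\lceil\log_2 n\rceil$ steps for $n\ge 2$, and that $\lceil\log_2 n\rceil$ equals $k$ or $k+1$ according to \eqref{boz_e8.2}. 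I would isolate this as the arithmetic heart of the proof and present it before the inductive argument so that the inductive step can simply quote it. Everything else — the reduction \eqref{plan:eq1}, the identification of the four sub-matrices, the union of the four sub-systems — is routine given Theorem~\ref{oiv.lem.2.8} and the induction hypothesis.
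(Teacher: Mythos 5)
Your overall route is the same as the paper's: split $B_n$ according to Definition~\ref{Defin 8.1}, apply Theorem~\ref{oiv.lem.2.8} to the resulting $2\times2$ block operator on $\tilde H^1\times\tilde H^2$ to get ``$B_n\gg0$ iff the four first-kind Schur operators are $\gg0$'', and then recurse on the smaller matrices. The paper writes this as an iterative ``prolonging of the construction'' rather than a formal induction on $n$, but that is only a cosmetic difference; your bookkeeping remarks (that condition i) of Theorem~\ref{oiv.lem.2.8} is subsumed in the four Schur inequalities, and that composing the symbols $\bigtriangleup^i_j$ means nested Schur complementation) are correct and consistent with the paper.

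There is, however, a genuine flaw exactly at the point you single out as the arithmetic heart, the verification of \eqref{boz_e8.2}. The size recursion is not $f\mapsto\left[\frac f2\right]+1$: after one splitting the two block sizes are $\left[\frac f2\right]$ and $f-\left[\frac f2\right]=\lceil f/2\rceil$, and $\left[\frac f2\right]+1$ coincides with the larger of these only for odd $f$. The lemma you propose --- that the iterates of $f\mapsto\left[\frac f2\right]+1$ starting at $n$ reach $1$ in exactly $\lceil\log_2 n\rceil$ steps --- is false as stated: $2$ is a fixed point of that map (since $\left[\frac22\right]+1=2$), so its iterates never reach $1$ at all; moreover, if you only use the upper bound $\left[\frac f2\right]+1$ on the block size, then for $n=2^k$ you can conclude only a depth $k+1$, not the value $m=k$ required by \eqref{boz_e8.2}. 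The repair is immediate: track the exact worst-case size $\lceil f/2\rceil$. Since $\lceil\lceil n/2\rceil/2\rceil=\lceil n/4\rceil$, iterating $f\mapsto\lceil f/2\rceil$ from $n\ge2$ reaches $1$ in exactly $\lceil\log_2 n\rceil$ steps, and $\lceil\log_2 n\rceil$ equals $k$ for $n=2^k$ and $k+1$ for $2^k<n<2^{k+1}$, which is precisely \eqref{boz_e8.2} (the paper's $m=[\log_2 n]_+$). With this replacement (and the observation that the four sub-systems may have depths differing by one, so the words in \eqref{boz_e8.1} have length \emph{at most} $m$, exactly as the count $V_n$ in Theorem~\ref{Th 8.2.2} presupposes), your induction goes through and coincides with the paper's argument. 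A very minor additional slip: your inequality $\left[\frac n2\right]+1<n$ fails at $n=2$, but since $n=2$ is one of your base cases this is harmless.
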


\begin{proof} Let's divide the matrix ${B_{n}=(B_{ij})_{i,j=1}^n}$
to four units, according to Definition~\ref{Defin 8.1},
${B_{n}=(B^{ij}_{n})_{i,j=1}^2}$. Let ${\tilde{H}^{1}=H_{1} \times
\ldots \times H_{[\frac{n}{2}]}}$,
${\tilde{H}^{2}=H_{[\frac{n}{2}]+1} \times ... \times H_{n}}$.
Then it can be consider $B_{n}$ as an operator matrix acting in
${\tilde{H}^{1}\times \tilde{H}^{2}}$, in there ${B^{ij}_{n}\in
\mathcal{L}(\tilde{H}^{j}\rightarrow \tilde{H}^{i})}$: $i,j=1,2$.
By virtue of Theorem~\ref{oiv.lem.2.8}, $B_{n}$ is positive
definite if and only if the following inequalities
$$
1) B^{11}_{n}=\bigtriangleup_{1}^{1}(B_{n})\gg 0; \quad
B^{22}_{n}=\bigtriangleup_{2}^{2}(B_{n})\gg 0;
$$
$$
2) \bigtriangleup_{2}^{1}(B_{n}) \gg 0; \quad
\bigtriangleup_{1}^{2}(B_{n}) \gg 0
$$
hold true. Again, dividing every from the operator matrices above
to four units, according to Definition~\ref{Defin 8.1}, and
applying Theorem~\ref{oiv.lem.2.8} for the corresponding operators
acting in ${\tilde{H}^{1}\times\tilde{H}^{1}}$ and
${\tilde{H}^{2}\times\tilde{H}^{2}}$, respectively, it follows 16
inequalities of the form
${\{\bigtriangleup^{i_{2}}_{j_{2}}\bigtriangleup^{i_{1}}_{j_{1}}(B_n)\gg
0\}}$; ${(i_{l},j_{l}=1,2)}$ for the spaces ${\left(H_{1} \times
\ldots \times
H_{\bigl[\frac{[\frac{n}{2}]}{2}\bigr]}\right)^{2},\,}$ $
{\left(H_{\bigl[\frac{[\frac{n}{2}]}{2}\bigr]+1} \times \ldots
\times H_{[\frac{n}{2}]}\right)^{2},\,\left(H_{[\frac{n}{2}]+1}
\times \ldots \times
H_{\bigl[\frac{n-[\frac{n}{2}]}{2}\bigr]}\right)^{2}}$ and $
{\left(H_{\bigl[\frac{n-[\frac{n}{2}]}{2}\bigr]+1} \times \ldots
\times H_{n}\right)^{2}}$ respectively. Prolonging by the
construction and reducing the matrix sizes at least up to
${2^{k+1-p}\times 2^{k+1-p}}$ as ${n<2^{k+1}}$ under $p$-th
inductive step, it leads after $m$ inductive steps to the
system~\eqref{boz_e8.1}.
\end{proof}

\begin{remark}
1) It's easy to see that in case of the commuting operators
$B_{ij}$, the system~\eqref{boz_e8.1} can be reduced to the well
known Sylvester condition of positive definiteness of $n$ main
minors of the matrix $B_n$.

2) Denoting by ${[x]_+}$ the "right integral part" of $x$ (i.e.
the nearest integer to $x$ from the right), it can rewrite the
condition~\eqref{boz_e8.2} in the form
$$
m=[\log_2 n]_+\,\,.
$$
\end{remark}

Let's estimate the number of inequalities in the
system~\eqref{boz_e8.1}.
\begin{theorem}\label{Th 8.2.2}
 The number of inequalities $V_{n}$ in system~\eqref{boz_e8.1} is
 \begin{equation}\label{boz_e8.3}
  V_{n}=2^{k}\cdot(3n-2^{k+1})\quad\mbox{as}\quad 2^{k}\leq n \leq
  2^{k+1}\quad (k\in\mathbb{N}_0,\,n\in\mathbb{N}).
 \end{equation}
 Moreover,
 \begin{equation}\label{boz_e8.4}
  n^{2}\leq V_{n} \leq (n+1)^{2}.
 \end{equation}
\end{theorem}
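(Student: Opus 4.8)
The plan is to track how the inequality count evolves through the $m$ inductive steps of the proof of Theorem~\ref{Th 8.2}. At each step every current block matrix of size $s\times s$ is split according to Definition~\ref{Defin 8.1} into a $\left[\frac{s}{2}\right]$-block and an $\left(s-\left[\frac{s}{2}\right]\right)$-block, and Theorem~\ref{oiv.lem.2.8} replaces the positive-definiteness condition on it by four conditions: two on the diagonal sub-blocks (sizes $\left[\frac{s}{2}\right]$ and $s-\left[\frac{s}{2}\right]$) and two on the Schur complements $\bigtriangleup^1_2,\bigtriangleup^2_1$ (again of sizes $\left[\frac{s}{2}\right]$ and $s-\left[\frac{s}{2}\right]$). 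So one $s\times s$ condition spawns two conditions of size $\left[\frac{s}{2}\right]$ and two of size $s-\left[\frac{s}{2}\right]$. Writing $2^k\le n<2^{k+1}$ (the boundary case $n=2^{k+1}$ being covered by the next $k$), I would carry out the recursion on the multiset of block sizes: starting from the single size $n$, after one step it is $\{[n/2],[n/2],n-[n/2],n-[n/2]\}$, etc. The terminal sizes after $m=[\log_2 n]_+$ steps are all $1$ or $2$, and the total count $V_n$ is the number of leaves that have been produced.

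The cleanest route is to prove~\eqref{boz_e8.3} directly by induction on $k$. For $n=1$ ($k=0$) we have $m=0$ and $V_1=1=2^0\cdot(3-2)$. For the inductive step I would split into the two ranges: if $2^{k}\le n\le 2^{k+1}$ then $\left[\frac{n}{2}\right]$ and $n-\left[\frac{n}{2}\right]$ both lie in $[2^{k-1},2^{k}]$, so each of the four descendants contributes $V$-values governed by the formula at level $k-1$; summing, $V_n = 2\,V_{[n/2]}+2\,V_{n-[n/2]}$. Using the level-$(k-1)$ formula $V_j=2^{k-1}(3j-2^{k})$ for $j\in\{[n/2],n-[n/2]\}$ and $[n/2]+(n-[n/2])=n$, I get $V_n = 2\cdot 2^{k-1}\bigl(3n-2\cdot 2^{k}\bigr)=2^{k}(3n-2^{k+1})$, which is~\eqref{boz_e8.3}. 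One must check the overlap at powers of $2$ is consistent: for $n=2^{k}$ the formula at level $k$ gives $2^{k}(3\cdot 2^{k}-2^{k+1})=2^{2k}=n^2$, matching the level-$(k+1)$ value $2^{k+1}(3\cdot 2^{k}-2^{k+2})=2^{k+1}\cdot 2^{k}\cdot(3-4)$ — wait, this is negative, so in fact the correct reading is that $n=2^{k}$ uses $k$, not $k+1$; I would simply note that the two sub-block sizes stay in the half-open range so the recursion closes without ambiguity, and verify the closed form on the endpoints $n=2^k$ and $n=2^{k+1}-1$ by hand.

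For the estimate~\eqref{boz_e8.4}, fix $k$ with $2^{k}\le n\le 2^{k+1}$ and regard $V_n=2^{k}(3n-2^{k+1})$ as a function of the real variable $n$ on $[2^{k},2^{k+1}]$ with $2^k$ held at the appropriate value — but since $n$ and $k$ are linked, the sharper approach is: I claim $n^2\le 2^{k}(3n-2^{k+1})\le (n+1)^2$ for all real $n\in[2^k,2^{k+1}]$. The left inequality is equivalent to $0\le -n^2+3\cdot 2^k n - 2^{2k+1} = -(n-2^k)(n-2^{k+1})$, which holds precisely on $[2^k,2^{k+1}]$. The right inequality is equivalent to $0\le n^2+2n+1 - 3\cdot 2^k n + 2^{2k+1} = n^2 - (3\cdot 2^k-2)n + (2^{2k+1}+1)$; its discriminant is $(3\cdot 2^k-2)^2 - 4(2^{2k+1}+1) = 9\cdot 2^{2k} - 12\cdot 2^k + 4 - 8\cdot 2^{2k} - 4 = 2^{2k} - 12\cdot 2^k = 2^k(2^k-12)$, so for $k\le 3$ the discriminant is $\le 0$ (for $k=3$: $8\cdot(-4)<0$) and the quadratic is everywhere nonnegative; for $k\ge 4$ one checks the quadratic is nonnegative at the endpoints $n=2^k$ (value $2^{2k}-3\cdot 2^{2k}+2\cdot 2^{2k}+2\cdot 2^k+1-2^k\cdot 2 \cdot\ldots$) — more cleanly, at $n=2^k$ it equals $(2^k+1)^2 - V_{2^k} = (2^k+1)^2 - 2^{2k} = 2^{k+1}+1>0$ and at $n=2^{k+1}$ it equals $(2^{k+1}+1)^2 - V_{2^{k+1}}$; since $V_{2^{k+1}}$ under \emph{this} $k$ is $0$, this is clearly positive, and the quadratic's minimum lies outside or its positivity at both endpoints plus the geometry closes it. The main obstacle I anticipate is bookkeeping the boundary cases at powers of $2$ where two values of $k$ are admissible and making sure the recursion $V_n=2V_{[n/2]}+2V_{n-[n/2]}$ is applied with the $k$ that keeps both arguments in $[2^{k-1},2^{k}]$; I would handle this by stating the recursion with the canonical choice $k=\lceil\log_2 n\rceil$ and checking the two formulas agree on the overlap, so that~\eqref{boz_e8.3} is well defined.
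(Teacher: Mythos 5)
Your treatment of \eqref{boz_e8.3} is essentially the paper's own argument: the recursion $V_{n}=2V_{[n/2]}+2V_{n-[n/2]}$ (equivalently $V_{1}=1$, $V_{2m}=4V_{m}$, $V_{2m+1}=2(V_{m}+V_{m+1})$) obtained from the four descendants of one splitting step, followed by induction on $k$ with a consistency check at powers of two; this part is sound, apart from the muddled aside about the overlap (the two admissible levels meet at $n=2^{k+1}$, where both readings give $2^{2k+2}$; the negative value you wrote down comes from evaluating the level-$(k+1)$ formula at $n=2^{k}$, which is outside its range). Your proof of the lower bound in \eqref{boz_e8.4}, via $V_{n}-n^{2}=-(n-2^{k})(n-2^{k+1})\geq 0$ on $[2^{k},2^{k+1}]$, is correct and is actually more rigorous than the paper's own justification of that half.

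The upper bound is where the argument genuinely breaks, and the gap cannot be closed, because that half of \eqref{boz_e8.4} is false. Your discriminant computation is right: $(3\cdot 2^{k}-2)^{2}-4(2^{2k+1}+1)=2^{k}(2^{k}-12)>0$ for $k\geq 4$, so $q(n)=(n+1)^{2}-V_{n}$ has two real roots; its vertex $n=3\cdot 2^{k-1}-1$ lies strictly inside $(2^{k},2^{k+1})$, so positivity at the endpoints (which is all you verify; note also the slip that ``$V_{2^{k+1}}$ under this $k$ is $0$'' --- it equals $2^{2k+2}$) does not give positivity in between, and ``the geometry'' does not close it. Concretely, for $k=4$ the roots of $q$ are $23\pm 4$, so $q(n)<0$ for $n=20,\ldots,26$: e.g.\ $V_{20}=4V_{10}=16V_{5}=448>441=21^{2}$ and $V_{24}=640>625=25^{2}$. (The paper's own proof of \eqref{boz_e8.4} is defective at the same point: it asserts $V_{n}=(n+1)^{2}$ at $n=2^{k+1}$, whereas \eqref{boz_e8.3} gives $V_{2^{k+1}}=(2^{k+1})^{2}$, and monotonicity of $V_{n}$ on $[2^{k},2^{k+1}]$ only bounds $V_{n}$ by $2^{2k+2}$, not by $(n+1)^{2}$.) What \eqref{boz_e8.3} actually yields is the two-sided estimate $n^{2}\leq V_{n}\leq \frac{9}{8}\,n^{2}$, the ratio $2^{k}(3n-2^{k+1})/n^{2}$ attaining its maximum $9/8$ at $n=2^{k+2}/3$; so the correct conclusion is $V_{n}=O(n^{2})$ with these sharp constants, not $V_{n}\leq (n+1)^{2}$.
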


\begin{proof}
From definition of the system~\eqref{boz_e8.1}, the following
recursion relations
$$
V_{1}=1;\,\,V_{2n}=4V_{n};\,\,V_{2n+1}=2(V_{n}+V_{n+1})\quad(n\in\mathbb{N})
$$
easily follow. Consider three possible cases.
\begin{enumerate}
\item [1)] $2^k\leq n<2^{k+1}$, ${n=2m}$ $(m\in\mathbb{N})$. Then
${2^{k-1}\leq m < 2^{k}}$ and applying~\eqref{boz_e8.3}, it
implies
$$
V_{n}=4V_{m}=4\cdot 2^{k-1}\cdot(3m-2^{k})= 2^{k}\cdot(3\cdot
2m-2^{k+1})=2^{k}\cdot(3n-2^{k+1}).
$$

\item [2)] $2^k\leq n<2^{k+1}$, ${n=2m+1}$ $(m\in\mathbb{N_0})$.
Then ${2^{k}\leq 2m+1 < 2^{k+1}}$, whence ${2^{k-1}< m+1 \leq
2^{k}}$ and applying~\eqref{boz_e8.3}, it follows
$$
V_{n}=2(V_{m}+V_{m+1})=2^{k}\cdot(3n-2^{k+1}).
$$
\item [3)] ${n=2^{k+1}}$ $(k\in\mathbb{N_0})$. Then
$$
V_{n}=V_{2^{k+1}}=4V_{2^{k}}=4\cdot2^{k-1}\cdot(3\cdot2^{k}-2^{k})
=2^{k}\cdot(3n-2^{k+1}).
$$
\end{enumerate}
Thus, the equality~\eqref{boz_e8.3} is obtained. In addition,
${V_{n}=n^{2}}$ as ${n=2^{k}}$ and ${V_{n}=(n+1)^{2}}$ as
${n=2^{k+1}}$. Whence, in view of increase $V_{n}$ for ${2^{k}\leq
n \leq 2^{k+1}}$, the inequality~\eqref{boz_e8.4} follows.
\end{proof}

Pass to the examples. First, consider the partial case ${n=3}$.

\begin{example}\label{Exam. 1}
Introduce (in case of ${B_3}$ with invertible $B_{ii}$
${(i=1,2,3)}$) generalized Schur complements
$$
\bigtriangleup_{jk}^{i}=B_{ii}-B_{ij}\cdot B_{jj}^{-1}\cdot
B_{jk}\quad (i,j,k=1,2,3).
$$
Then direct calculations show that the
inequalities~\eqref{boz_e8.1} take form of
$$
B_{11}\gg 0; \quad B_{22}\gg 0; \quad B_{33}\gg 0;
$$
$$
\bigtriangleup_{12}^{2}\gg 0; \quad \bigtriangleup_{32}^{2}\gg 0;
\quad \bigtriangleup_{13}^{3}\gg 0; \quad
\bigtriangleup_{23}^{3}\gg 0;
$$
\begin{equation}\label{boz_e8.10}
\bigtriangleup_{12}^{2}-\bigtriangleup_{13}^{2}\cdot
(\bigtriangleup_{13}^{3})^{-1}\cdot\bigtriangleup_{12}^{3}\gg 0;
\quad \bigtriangleup_{13}^{3}-\bigtriangleup_{12}^{3}\cdot
(\bigtriangleup_{12}^{2})^{-1}\cdot\bigtriangleup_{13}^{2}\gg 0;
\end{equation}
$$
B_{11}-(B_{12}\cdot B_{22}^-\cdot B_{21}+B_{12}\cdot B_{23}^-\cdot
B_{31}+B_{13}\cdot B_{32}^-\cdot B_{21}+B_{13}\cdot B_{33}^-\cdot
B_{31})\gg 0.
$$
Here we denote by $B_{ij}^-$ corresponding elements of the inverse
matrix to ${(B_{ij})_{i,j=2}^3}$. In this case,
$$
B_{22}^-=B_{22}^{-1}\cdot (I_{H_2}+B_{23}\cdot
(\bigtriangleup_{23}^{3})^{-1}\cdot B_{32} \cdot
B_{22}^{-1})=(I_{H_2}+B_{22}^{-1}\cdot B_{23}\cdot
(\bigtriangleup_{23}^{3})^{-1}\cdot B_{32})\cdot B_{22}^{-1};
$$
$$
B_{33}^-=B_{33}^{-1}\cdot (I_{H_3}+B_{32}\cdot
(\bigtriangleup_{32}^{2})^{-1}\cdot B_{23} \cdot
B_{33}^{-1})=(I_{H_3}+B_{33}^{-1}\cdot B_{32}\cdot
(\bigtriangleup_{32}^{2})^{-1}\cdot B_{23})\cdot B_{33}^{-1};
$$
$$
B_{23}^-=-(\bigtriangleup_{32}^{2})^{-1}\cdot B_{23} \cdot
B_{33}^{-1}=-B_{22}^{-1}\cdot
B_{23}(\bigtriangleup_{23}^{3})^{-1};
$$
$$
B_{32}^-=-(\bigtriangleup_{23}^{3})^{-1}\cdot B_{32} \cdot
B_{22}^{-1}=-B_{33}^{-1}\cdot
B_{32}(\bigtriangleup_{32}^{2})^{-1}.
$$
Note also that the expressions in the third row
of~\eqref{boz_e8.10} can be considered as the "second order Schur
complements", $\bigtriangleup_{123}^{23}$ and
$\bigtriangleup_{132}^{32}$, relatively.
\end{example}

Further, consider the case of "bidiagonal" operator matrix $B_n$.
In this case general definiteness conditions~\eqref{boz_e8.1}
assume an essential simplification.

\begin{example}\label{Exam. 2}
Let $B_{n}=\{B_{ij}\}_{i,j=1}^{n}$ be operator matrix in $H$ with
nonzero principal diagonals and zero all other elements.

1) In case ${n=2m}$,
$$
B_{n}= \left(\begin{array}{cccccccc} B_{11} & 0 & ... & 0 & 0 &
... & 0 &
B_{1n}\\
0 & B_{22} & \ldots & 0 & 0 & \ldots & B_{2,n-1} & 0\\
\vdots & \vdots &  & \vdots & \vdots &  & \vdots & \vdots\\
 0 & 0 & \ldots & B_{\frac{n}{2},\frac{n}{2}} & B_{\frac{n}{2},\frac{n}{2}+1} & \ldots & 0 &
 0\\
0 & 0 & \ldots & B_{\frac{n}{2}+1,\frac{n}{2}} &
B_{\frac{n}{2}+1,\frac{n}{2}+1} & \ldots & 0 &
 0\\
\vdots & \vdots &  & \vdots & \vdots &  & \vdots & \vdots\\
B_{n1} & 0 & \ldots & 0 & 0 & \ldots & 0 & B_{nn}
 \end{array} \right).
$$
Then the units ${B^{ij}_{n}}$ take form
$$
B_{n}^{11}=
\left(\begin{array}{ccc} B_{11} &  ... & 0\\
\vdots &  & \vdots\\
 0 &  \ldots & B_{\frac{n}{2},\frac{n}{2}}
 \end{array} \right), \quad B_{n}^{22}= \left(\begin{array}{ccc}
B_{\frac{n}{2}+1,\frac{n}{2}+1} & \ldots &
 0\\
\vdots &  &\vdots\\
 0 & \ldots  & B_{nn}
 \end{array} \right),
 $$
 $$
 B_{n}^{12}= \left(\begin{array}{ccc}
0 & \ldots  & B_{1n}\\
\vdots &  & \vdots\\
 B_{\frac{n}{2},\frac{n}{2}+1} & \ldots & 0
 \end{array} \right), B_{n}^{21}= \left(\begin{array}{ccc}
0 & \ldots & B_{\frac{n}{2}+1,\frac{n}{2}}\\
\vdots &  & \vdots\\
 B_{n1} & \ldots & 0
 \end{array} \right).
 $$
 Applying Schur operators, we obtain the diagonal matrix
 $$
 \bigtriangleup_{1}^{1}(B_{n})=B^{11}_{n},\,\,
 diag \bigtriangleup_{2}^{1}(B_{n})=\{B_{ii}-B_{i,n+1-i}\cdot B^{-1}_{n+1-i,n+1-i}\cdot B_{n+1-i,i}\}_{i=1}^{\frac{n}{2}}
 $$
$$
\bigtriangleup_{2}^{2}(B_{n})=B^{22}_{n},\,\,
 diag \bigtriangleup_{1}^{2}(B_{n})=\{B_{ii}-B_{i,n+1-i}\cdot B^{-1}_{n+1-i,n+1-i}\cdot B_{n+1-i,i}\}_{i=\frac{n}{2}+1}^{n}
 $$
Since conditions~\eqref{boz_e8.1}--\eqref{boz_e8.2} for a diagonal
matrix take form $ B_{ii}\gg0$ then ${B_{n}\gg0}$ if and only if
the inequalities
$$ B_{ii}\gg0; \quad B_{ii}-B_{i,n+1-i}\cdot
B^{-1}_{n+1-i,n+1-i}\cdot
B_{n+1-i,i}\gg0\,\,\quad(i=\overline{1,n})
$$
hold.

 2) In case ${n=2m+1}$
 $$
 B_{n}= \left(\begin{array}{ccccccccc}
B_{11} & 0 & \ldots & 0 & 0 & 0 & \ldots & 0 &
B_{1n}\\
0 & B_{22} & \ldots & 0 & 0 & 0 & \ldots & B_{2,n-1} & 0\\
\vdots & \vdots &  & \vdots & \vdots & \vdots &  & \vdots & \vdots\\
 0 & 0 & \ldots & B_{\frac{n-1}{2},\frac{n-1}{2}}& 0 & B_{\frac{n-1}{2},\frac{n+3}{2}} & \ldots & 0 &
 0\\
0 & 0 & \ldots & 0 & B_{\frac{n+1}{2},\frac{n+1}{2}} & 0 & \ldots
& 0 &
 0\\
\vdots & \vdots &  & \vdots & \vdots & \vdots &  & \vdots & \vdots\\
B_{n1} & 0 & \ldots & 0 & 0 & 0 & \ldots & 0 & B_{nn}
 \end{array} \right).
 $$
The calculations, analogous with the previous case, lead to the
following criterion: $B_{n}\gg 0$ if and only if the inequalities
$$
B_{ii}\gg0\,\,(i=\overline{1,n}); \quad B_{ii}-B_{i,n+1-i}\cdot
B^{-1}_{n+1-i,n+1-i}\cdot B_{n+1-i,i}\gg0\,\,\,
(i=\overline{1,n},\,i\neq \frac{n+1}{2})
$$
hold.
\end{example}

Finally, Theorem~\ref{Th 8.2} can be formulated as positive
definiteness criterion for the quadratic forms acting in a product
of Hilbert spaces.

\begin{theorem}\label{Th 8.5}
Let $H_{i}$ ${(i=\overline{1,n})}$ be separable real Hilbert
spaces, ${H=H_{1} \times ... \times H_{n}}$, $\varphi$ is
continuous quadratic form on $H$ associated with a self-adjoint
linear continuous operator ${B_{n}=(B_{ij})_{i,j=1}^n}$, i.e.
${\varphi(h)=\langle B_{n}h,h \rangle}$. Then the form $\varphi$
is positive definite on $H$ if and only if the
conditions~\eqref{boz_e8.1}--\eqref{boz_e8.2} are fulfilled.
\end{theorem}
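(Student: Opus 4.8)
The plan is to reduce Theorem~\ref{Th 8.5} directly to Theorem~\ref{Th 8.2} by unwinding the definition of positive definiteness for a quadratic form. Recall that a continuous quadratic form $\varphi$ on $H$ is called positive definite precisely when there exists $c>0$ with $\varphi(h)\geq c\|h\|^2$ for all $h\in H$; since $\varphi(h)=\langle B_n h,h\rangle$ with $B_n=B_n^*$ continuous, this inequality is exactly the statement $B_n\gg 0$ in the operator sense used throughout Section~\ref{s12.1}. So the two notions of positive definiteness coincide, and the theorem is immediate.

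Concretely, I would argue as follows. First I would note that $B_n$ is self-adjoint and continuous by hypothesis, so it is precisely the class of operators to which Theorem~\ref{Th 8.2} applies. Then I would write out the chain of equivalences: $\varphi\gg 0$ on $H$ $\iff$ $\exists\,c>0:\ \langle B_n h,h\rangle\geq c\|h\|^2\ \forall h\in H$ $\iff$ $B_n\gg 0$ $\iff$ the system of inequalities \eqref{boz_e8.1} holds with $m$ given by \eqref{boz_e8.2}, where the last equivalence is exactly Theorem~\ref{Th 8.2}. This completes the proof.

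There is essentially no obstacle here: the only thing to be careful about is that the underlying definition of "positive definite" for a form and for an operator are literally the same inequality, so that no extra approximation or closure argument is needed — the form $\varphi$ is automatically continuous because $B_n$ is, and conversely the polarization identity recovers $\langle B_n\cdot,\cdot\rangle$ from $\varphi$, so the correspondence between $\varphi$ and $B_n$ is bijective on the relevant class. I would phrase the proof in two or three sentences, citing Theorem~\ref{Th 8.2} as the substantive input and treating the rest as an unwinding of definitions.

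If one wanted to be slightly more self-contained, the only line worth spelling out is: for $h=(h_1,\dots,h_n)$ one has $\varphi(h)=\sum_{i,j=1}^n\langle B_{ij}h_j,h_i\rangle$, and positive definiteness of this expression (uniformly in $h$ with the square-norm lower bound) is by definition $B_n\gg 0$; everything else is Theorem~\ref{Th 8.2}.
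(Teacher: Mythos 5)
Your proposal is correct and matches the paper's treatment: the paper gives no separate argument for Theorem~\ref{Th 8.5}, presenting it as an immediate reformulation of Theorem~\ref{Th 8.2} once one notes that positive definiteness of $\varphi(h)=\langle B_n h,h\rangle$ is by definition the operator inequality $B_n\gg 0$. Your unwinding of the definitions and citation of Theorem~\ref{Th 8.2} is exactly the intended (and sufficient) reasoning.
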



\section{\textbf{Nonnegativity conditions
for self-adjoint operator matrix in product of
$\textit{\textbf{n}}$ Hilbert spaces}}\label{s12.2}

\quad

First, consider a case of $2\times 2$ operator matrix
${B_{2}=(B_{ij})_{i,j=1}^2}$ acting in ${H_{1}\times H_{2}}$. For
the case of product of the two complex Hilbert spaces a general
nonnegativity criterion for $B_{2}$ was obtained
in~(\cite{123},~cor. 2.2) in terms of linear relations. In this
connection, self-adjointness of $B_{2}$ is necessary condition for
$B_2\gg 0$. We formulate below for real case a simplified
sufficient condition of nonnegativity $B_{2}$ under simplifying
assumptions of a "partial self-adjointness" of $B_{2}$ and
continuous invertibility of $B_{11}$.

\begin{theorem}\label{Th 8.3}
Let $H_{1}$, $H_{2}$ be real separable Hilbert spaces,
${B_{2}=(B_{ij})_{i,j=1}^2}$ is continuous linear operator in
${H_{1}\times H_{2}}$, where $B_{11}$ is continuously invertible
and self-adjoint and ${B_{12}=B_{21}^{*}}$. Then $B_{2}$ is
nonnegative if and only if
$$
B_{11}\geq 0\quad \mbox{and}\quad
\bigtriangleup_{1}^{2}(B_{2})=B_{22}-B_{21}\cdot
(B_{11})^{-1}\cdot B_{12}\geq 0;
$$
or, that is equivalent,
$$
B_{22}\geq 0\quad \mbox{and}\quad
\bigtriangleup_{2}^{1}(B_{2})=B_{11}-B_{12}\cdot
(B_{22})^{-1}\cdot B_{21}\geq 0.
$$
\end{theorem}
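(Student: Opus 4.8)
The plan is to exploit the block ``$LDL^{*}$'' (Frobenius--Schur) factorization made available by the continuous invertibility of $B_{11}$. I would set
$$
L=\begin{pmatrix} I_{H_1} & 0\\ B_{21}B_{11}^{-1} & I_{H_2}\end{pmatrix},
\qquad
D=\begin{pmatrix} B_{11} & 0\\ 0 & \bigtriangleup_{1}^{2}(B_2)\end{pmatrix},
$$
and first observe that, since $B_{11}=B_{11}^{*}$ is continuously invertible, $B_{11}^{-1}=(B_{11}^{-1})^{*}$, so that $B_{12}=B_{21}^{*}$ gives $(B_{21}B_{11}^{-1})^{*}=B_{11}^{-1}B_{12}$ and hence
$$
L^{*}=\begin{pmatrix} I_{H_1} & B_{11}^{-1}B_{12}\\ 0 & I_{H_2}\end{pmatrix}.
$$
A routine block multiplication then shows $B_2=L\,D\,L^{*}$, and $L$ is a continuous bijection of $H=H_1\times H_2$ (with continuous inverse obtained by replacing $B_{21}B_{11}^{-1}$ by $-B_{21}B_{11}^{-1}$), so the same is true of $L^{*}$. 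Note that, unlike Theorem~\ref{oiv.lem.2.8}, here only one of the diagonal blocks is assumed invertible, so the proof must build this congruence directly rather than quote the positive definite criterion.

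With this in hand, for every $h\in H$ one has $\langle B_2h,h\rangle=\langle DL^{*}h,L^{*}h\rangle$, and since $g:=L^{*}h$ runs over all of $H$ as $h$ does, $B_2\geq 0$ holds if and only if $D\geq 0$. Writing $g=(g_1,g_2)$ and using $\langle Dg,g\rangle=\langle B_{11}g_1,g_1\rangle+\langle\bigtriangleup_{1}^{2}(B_2)g_2,g_2\rangle$, I would test separately on $g=(g_1,0)$ and $g=(0,g_2)$ to conclude that $D\geq 0$ is equivalent to the conjunction $B_{11}\geq 0$ and $\bigtriangleup_{1}^{2}(B_2)\geq 0$. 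This yields the first characterization.

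For the second (equivalent) form, I would interchange the roles of $H_1$ and $H_2$: conjugating $B_2$ by the flip $J\colon(h_1,h_2)\mapsto(h_2,h_1)$ replaces it by $\bigl(\begin{smallmatrix}B_{22}&B_{21}\\ B_{12}&B_{11}\end{smallmatrix}\bigr)$, and the argument above applies verbatim once $B_{22}$ is (self-adjoint and) continuously invertible --- the hypothesis $B_{12}=B_{21}^{*}$ being symmetric in the two indices. Since each of the two criteria is then equivalent to $B_2\geq 0$, they are equivalent to each other.

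I do not expect a genuine obstacle here; the computations are entirely routine. The one point that deserves care is that self-adjointness of $B_{22}$ --- and of $B_2$ as a whole --- is not assumed, so ``$T\geq 0$'' must be read throughout as nonnegativity of the associated quadratic form $h\mapsto\langle Th,h\rangle$; what makes the argument still go through is that the congruence $B_2=LDL^{*}$ is an identity of operators (not merely of forms), so passing to quadratic forms only at the very end loses nothing.
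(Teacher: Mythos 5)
Your argument is correct, and at its core it is the same Schur-complement ``completing the square'' that the paper uses, only packaged as a congruence. The paper's entire proof is a citation of the verified identity $\langle B_{2}h,h\rangle=\|B_{11}^{1/2}h_{1}+B_{11}^{-1/2}B_{12}h_{2}\|^{2}+\langle\widetilde{\bigtriangleup}\,h_{2},h_{2}\rangle$ with $\widetilde{\bigtriangleup}=B_{22}-B_{21}B_{11}^{-1}B_{12}$ (taken from Kopachevsky--Krein), which presupposes $B_{11}\geq 0$ so that $B_{11}^{\pm 1/2}$ exist; necessity of $B_{11}\geq 0$ then still has to be extracted separately by testing on vectors of the form $(h_{1},0)$. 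Your Frobenius--Schur factorization $B_{2}=LDL^{*}$ uses only the stated hypotheses (continuous invertibility and self-adjointness of $B_{11}$, $B_{12}=B_{21}^{*}$), and because $L^{*}$ is a bicontinuous bijection it yields both implications in one stroke; expanding $\langle DL^{*}h,L^{*}h\rangle$ recovers the paper's identity with $B_{11}$ in place of the square roots, so your version is marginally more general and fully self-contained. You are also more careful than the paper about two points the statement leaves implicit: the second, ``equivalent'' characterization involves $B_{22}^{-1}$ and therefore only makes sense under the additional hypothesis that $B_{22}$ is continuously invertible (and, for your flip argument, self-adjoint) --- for instance $B_{11}=I$, $B_{12}=B_{21}=0$, $B_{22}=0$ satisfies the theorem's hypotheses and the first criterion, while the second is not even defined; and since $B_{2}$ as a whole is not assumed self-adjoint, $\geq 0$ must be read as nonnegativity of the associated quadratic form, as you note. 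The paper's one-line proof addresses neither point, so these are blemishes of the statement rather than gaps in your argument.
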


The proof follows from the verified identity
$$
\langle B_{2}h,h\rangle=
\|B_{11}^{1/2}h_{1}+B_{11}^{-1/2}B_{12}h_{2}\|^{2}+\langle
\widetilde{\bigtriangleup_{2}^{1}}h_{2},h_{2}\rangle,\,\,\,\,\,\,
h=(h_1,h_{2})\in H_{1}\times H_{2},
$$
see, e.g.~(\cite{9}, Vol. I, Ch. I, 1.3). Note also that, in
accordance with~(\cite{11}, Ch. II, Lemma 3.21), invertibility
condition for $B_{11}$ can be replaced by the considerably more
general condition of invertibility of
$B_{11}\bigl|\bigr._{\overline{ran\, B_{11}}}$\,\,\,.

Let's pass to general case. Once again, consider now a linear
continuous operator $B_{n}$ acting in product of $n$ separable
real Hilbert spaces ${H=H_{1} \times \ldots \times H_{n}}$ and
defined by operator matrix ${(B_{ij})^{n}_{i,j=1}}$. Introduce
\textit{Schur operators of the second kind}.

\begin{definition}\label{Defin 8.2}
 Let's divide the matrix $B_{n}$ to four units:
 ${\widetilde{B}_{n}^{11}=B_{11}}$,
 $\widetilde{B}_{n}^{22}$ is its adjacent minor of
the order ${(n-1)\times (n-1)}$, $\widetilde{B}_{n}^{12},
 \widetilde{B}_{n}^{21}$ are corresponding row and
column of the matrix $B_n$, having size ${1\times (n-1)}$ and
${(n-1)\times 1}$, respectively. Suppose that
${\widetilde{B}_{n}^{11}=B_{11}}$ is continuously invertible.
Let's introduce now two types of {\it Schur operators of the
second kind}:
$$
\widetilde{\bigtriangleup_{1}^{1}}(B_{n})=\widetilde{B}^{11}_{n};
\quad
\widetilde{\bigtriangleup_{1}^{2}}(B_{n})=\widetilde{B}^{22}_{n}-\widetilde{B}^{21}_{n}\cdot
(\widetilde{B}^{11}_{n})^{-1}\cdot \widetilde{B}^{12}_{n}.
$$
\end{definition}

Applying repeatedly second kind Schur operators to
$B_{n}=(B_{ij})_{i,j=1}^n$ it is not difficult, by analogy with
the proof of Theorem~\ref{oiv.lem.2.8}, to generalize
Theorem~\ref{Th 8.3} for the case of an arbitrary $n$.

\begin{theorem}\label{Th 8.4}
Let $H_{i}$ ${(i=\overline{1,n})}$ be separable real Hilbert
spaces, ${H=H_{1} \times \ldots \times H_{n}}$,
$B_{n}=(B_{ij})_{i,j=1}^n$ be a linear continuous self-adjoint
operator in $H$, in there ${B_{ij}\in \mathcal{L}(H_{j},H_{i})}$;
 ${i,j=\overline{1,n}}$. Suppose moreover that the all operators
 \begin{equation}\label{boz_e8.5}
  \widetilde{\bigtriangleup_{1}^{1}}(\widetilde{\bigtriangleup_{1}^{2}})^{k}(B_{n})\quad
  (k=\overline{0,n-2}),
 \end{equation}
 are continuously invertible. Then $B_{n}$ is nonnegative if and
 only if the following system of nonnegativity inequalities is
 fulfilled:
 \begin{equation}\label{boz_e8.6}
  \widetilde{\bigtriangleup_{1}^{1}}(\widetilde{\bigtriangleup_{1}^{2}})^{k}(B_{n})\geq
  0\quad (k=\overline{0,n-2});\quad
  (\widetilde{\bigtriangleup_{1}^{2}})^{n-1}(B_{n})\geq 0.
 \end{equation}
\end{theorem}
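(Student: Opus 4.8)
The plan is to reduce the $n$-dimensional problem to the $2\times 2$ case of Theorem~\ref{Th 8.3} by induction on $n$, peeling off one coordinate at a time. First I would split $B_n$ according to Definition~\ref{Defin 8.2} into the four units $\widetilde{B}_n^{11}=B_{11}$, $\widetilde{B}_n^{12}$, $\widetilde{B}_n^{21}=(\widetilde{B}_n^{12})^*$ and $\widetilde{B}_n^{22}$, so that $B_n$ becomes a $2\times 2$ operator matrix acting in $H_1\times(H_2\times\ldots\times H_n)$ with $\widetilde{B}_n^{11}=B_{11}$ continuously invertible and self-adjoint (using $k=0$ in~\eqref{boz_e8.5}) and with $\widetilde{B}_n^{21}=(\widetilde{B}_n^{12})^*$ by self-adjointness of $B_n$. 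Theorem~\ref{Th 8.3} then applies verbatim: $B_n\geq 0$ if and only if $\widetilde{\bigtriangleup_1^1}(B_n)=B_{11}\geq 0$ and $\widetilde{\bigtriangleup_1^2}(B_n)=\widetilde{B}_n^{22}-\widetilde{B}_n^{21}(B_{11})^{-1}\widetilde{B}_n^{12}\geq 0$.

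The next step is the induction. The operator $\widetilde{\bigtriangleup_1^2}(B_n)$ is a self-adjoint continuous operator matrix in the product of the $n-1$ spaces $H_2\times\ldots\times H_n$; to apply the inductive hypothesis to it I must check that the hypothesis~\eqref{boz_e8.5} for $B_n$ transfers to the corresponding hypothesis for $\widetilde{\bigtriangleup_1^2}(B_n)$. But that is immediate from the shifted indexing: the operators $\widetilde{\bigtriangleup_1^1}(\widetilde{\bigtriangleup_1^2})^k\bigl(\widetilde{\bigtriangleup_1^2}(B_n)\bigr)=\widetilde{\bigtriangleup_1^1}(\widetilde{\bigtriangleup_1^2})^{k+1}(B_n)$ for $k=\overline{0,n-3}$ are exactly the operators in~\eqref{boz_e8.5} with index running from $1$ to $n-2$, all assumed invertible. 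By the inductive hypothesis, $\widetilde{\bigtriangleup_1^2}(B_n)\geq 0$ if and only if $\widetilde{\bigtriangleup_1^1}(\widetilde{\bigtriangleup_1^2})^k(B_n)\geq 0$ for $k=\overline{1,n-2}$ together with $(\widetilde{\bigtriangleup_1^2})^{n-1}(B_n)\geq 0$. Conjoining this with the two conditions from the first step (of which $B_{11}\geq 0$ is the $k=0$ instance) gives precisely~\eqref{boz_e8.6}. The base case $n=2$ is Theorem~\ref{Th 8.3} itself.

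The main obstacle I anticipate is purely bookkeeping rather than analytic: one must verify that the iterated Schur operator of the second kind is \emph{associative} in the sense that forming $\widetilde{\bigtriangleup_1^2}$ of $B_n$ and then again of the result agrees, after relabeling, with applying $\widetilde{\bigtriangleup_1^2}$ twice to $B_n$ — i.e. that the notation $(\widetilde{\bigtriangleup_1^2})^k(B_n)$ is unambiguous and that at each stage the leading entry is genuinely the $(k+1,k+1)$-type Schur complement appearing in~\eqref{boz_e8.5}. This is where the invertibility assumptions~\eqref{boz_e8.5} are used: they guarantee that each successive $\widetilde{\bigtriangleup_1^1}(\cdot)$ block is continuously invertible and self-adjoint, so that Theorem~\ref{Th 8.3} is legitimately applicable at every level of the recursion. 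Once this indexing is pinned down, the equivalence follows by a finite chain of applications of Theorem~\ref{Th 8.3}, with no further estimates needed; it also gives, as in the identity quoted after Theorem~\ref{Th 8.3}, an explicit sum-of-squares-type decomposition $\langle B_n h,h\rangle=\sum_{k=0}^{n-2}\|(\widetilde{\bigtriangleup_1^1}(\widetilde{\bigtriangleup_1^2})^k)^{1/2}(\,\cdot\,)\|^2+\langle(\widetilde{\bigtriangleup_1^2})^{n-1}(B_n)h_n,h_n\rangle$, which makes both implications transparent.
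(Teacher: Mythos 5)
Your proposal is correct and follows essentially the same route as the paper: split $B_n$ via Definition~\ref{Defin 8.2} into a $2\times 2$ block operator on $H_1\times(H_2\times\ldots\times H_n)$, apply Theorem~\ref{Th 8.3} (with invertibility of $B_{11}$ supplied by the $k=0$ case of~\eqref{boz_e8.5} and self-adjointness inherited from $B_n$), and then iterate/induct on the $(n-1)\times(n-1)$ complement $\widetilde{\bigtriangleup_{1}^{2}}(B_{n})$, the remaining invertibility hypotheses legitimizing each successive application. Your bookkeeping remark about the unambiguity of $(\widetilde{\bigtriangleup_{1}^{2}})^{k}(B_{n})$ is exactly the point the paper handles implicitly in its inductive step, so no genuinely different ideas are involved.
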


\begin{proof}
Let's divide the matrix ${B_{n}=(B_{ij})^{n}_{i,j=1}}$ to four
units, according to Definition~\ref{Defin 8.2},
${B_{n}=(\widetilde{B}_{n}^{ij})_{i,j=1}^2}$. Let
${\tilde{H}^{1}=H_{1}}$, ${\tilde{H}^{2}=H_{2} \times \ldots
\times H_{n}}$. Then it can consider $B_{n}$ as an operator matrix
acting in ${\tilde{H}^{1} \times \tilde{H}^{2}}$, in there
${\widetilde{B}_{n}^{ij}\in \mathcal{L}(\tilde{H}^{j},
\tilde{H}^{i})}$; ${i,j=1,2}$. Under assumptions of continuous
invertibility of
${\widetilde{B}_{n}^{11}=B_{11}=\widetilde{\bigtriangleup_{1}^{1}}(B_{n})}$
and self-adjointness of $B_{n}$, by virtue of Theorem~\ref{Th
8.3}, $B_{n}$ is nonnegative if and only if the following
inequalities
$$
\widetilde{\bigtriangleup_{1}^{1}}(B_{n})\geq0,\qquad
\widetilde{\bigtriangleup_{1}^{2}}(B_{n})=\tilde{B}^{22}_{n}-
\tilde{B}^{21}_{n}\cdot(\tilde{B}^{11}_{n})^{-1}\cdot\tilde{B}^{12}_{n}\geq0
$$
hold true. Again, dividing the operator matrix
${\widetilde{\bigtriangleup_{1}^{2}}(B_{n})}$ of the size
${(n-1)\times(n-1)}$ according to Definition~\ref{Defin 8.2}, it
can apply Theorem~\ref{Th 8.3} now to the operator
${\widetilde{\bigtriangleup_{1}^{2}}(B_{n})}$. Here
self-adjointness $B_{n}$ implies one for
${\widetilde{\bigtriangleup_{1}^{2}}(B_{n})}$ and continuous
invertibility of
${\widetilde{\bigtriangleup_{1}^{1}}(\widetilde{\bigtriangleup_{1}^{2}}(B_{n}))}$
follows from~\eqref{boz_e8.5}. Hence, we obtain the following
necessary and sufficient nonnegativity conditions for
${\widetilde{\bigtriangleup_{1}^{2}}(B_{n})}$:
$$
\widetilde{\bigtriangleup_{1}^{1}}\widetilde{\bigtriangleup_{1}^{2}}(B_{n})\geq0,\quad
(\widetilde{\bigtriangleup_{1}^{2}})^{2}(B_{n})\geq0.
$$
Prolonging by induction the construction and reducing the matrix
size up to ${(n-p)\times(n-p)}$ under the $p$-th inductive steps,
it leads after $n$ inductive steps to the system~\eqref{boz_e8.6}.
\end{proof}

\begin{remark}
Note that general system of conditions in Theorem~\ref{Th 8.4}
consists of $n$ nonnegativity inequalities~\eqref{boz_e8.6} and
${n-1}$ invertibility conditions~\eqref{boz_e8.5}.
\end{remark}

Finally, Theorem~\ref{Th 8.4} can be formulated as nonnegativity
condition for the quadratic forms acting in a product of Hilbert
spaces.

\begin{theorem}\label{Th 8.6}
Let $H_{i}$ ${(i=\overline{1,n})}$ be separable real Hilbert
spaces, ${H=H_{1} \times ... \times H_{n}}$, $\varphi$ is
continuous quadratic form on $H$ associated with a self-adjoint
linear continuous operator ${B_{n}=(B_{ij})_{i,j=1}^n}$, i.e.
${\varphi(h)=\langle B_{n}h,h \rangle}$ and the
conditions~\eqref{boz_e8.5} are fulfilled. Then the form $\varphi$
is nonnegative on $H$ if and only if the
inequalities~\eqref{boz_e8.6} hold true.
\end{theorem}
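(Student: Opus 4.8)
The plan is to reduce Theorem~\ref{Th 8.6} to Theorem~\ref{Th 8.4} via the standard correspondence between bounded self-adjoint operators and continuous quadratic forms. First I would recall that, for a self-adjoint continuous operator $B_{n}$ on $H$, the associated form ${\varphi(h)=\langle B_{n}h,h\rangle}$ is automatically continuous, and that by the very definition of an operator being nonnegative, $\varphi$ is nonnegative on $H$ (i.e. ${\varphi(h)\geq 0}$ for every ${h\in H}$) if and only if ${B_{n}\geq 0}$ in the operator sense. No polarization or density argument is needed here, since the operator $B_{n}$ is prescribed from the outset and ``${B_{n}\geq 0}$'' literally means ${\langle B_{n}h,h\rangle\geq 0}$ for all $h$.

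Next, since the hypotheses of Theorem~\ref{Th 8.6} already include the continuous invertibility conditions~\eqref{boz_e8.5}, the full hypotheses of Theorem~\ref{Th 8.4} are in force. Applying Theorem~\ref{Th 8.4} then gives that ${B_{n}\geq 0}$ if and only if the system of nonnegativity inequalities~\eqref{boz_e8.6} holds. Chaining this equivalence with the one from the previous paragraph yields that $\varphi$ is nonnegative on $H$ if and only if~\eqref{boz_e8.6} holds, which is precisely the claim.

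There is essentially no substantive obstacle; the single point worth one explicit sentence is that the iterated Schur operators ${\widetilde{\bigtriangleup_{1}^{1}}(\widetilde{\bigtriangleup_{1}^{2}})^{k}(B_{n})}$ entering~\eqref{boz_e8.6} are well defined exactly because of assumption~\eqref{boz_e8.5}, a fact already built into the statement of Theorem~\ref{Th 8.4}. One may also note in passing that the identical argument transforms the positive-definiteness criterion (Theorem~\ref{Th 8.2}) into Theorem~\ref{Th 8.5}; Theorem~\ref{Th 8.6} is simply its nonnegativity analogue, obtained by substituting Theorem~\ref{Th 8.4} for Theorem~\ref{Th 8.2} throughout.
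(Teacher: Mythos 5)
Your proposal is correct and coincides with the paper's (implicit) argument: the paper presents Theorem~\ref{Th 8.6} as an immediate reformulation of Theorem~\ref{Th 8.4}, using exactly the equivalence ``$\varphi\geq 0$ on $H$ iff $B_n\geq 0$'' built into the definition of operator nonnegativity. Nothing further is needed.
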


Finally, consider in this item also the partial case ${n=3}$.

\begin{example}\label{Exam. 1.2}
Introduce (in case of ${B_3}$ with an invertible $B_{jj}$)
generalized second order Schur complements
$$
\bigtriangleup^{ik}_{j}=B_{ik}-B_{ij}\cdot B_{jj}^{-1}\cdot
B_{jk}\quad (i,j,k=1,2,3).
$$
Then direct calculations show that the invertibility
conditions~\eqref{boz_e8.5} take form of invertibility of the
operator
$$
B_{11}\quad \mbox{and} \quad
\bigtriangleup_{1}^{22}=B_{22}-B_{21}\cdot(B_{11})^{-1}\cdot
B_{12},
$$
and the nonnegativity conditions~\eqref{boz_e8.6} take form of
$$
B_{11}\gg 0; \quad \bigtriangleup_{1}^{22}\gg 0; \quad \mbox{and}
\quad \bigtriangleup_{1}^{33}-\bigtriangleup_{1}^{32}\cdot
(\bigtriangleup_{1}^{22})^{-1}\cdot\bigtriangleup_{1}^{23}\gg 0.
$$
\end{example}

\section{\textbf{Application to functional extrema in product of Hilbert spaces}}\label{s12.1}

\quad Remind a well known~(\cite{38},~Ch. I,~Th. 8.3.3) classical
sufficient condition of a local extremum of functional acting in
Hilbert space.
\begin{theorem}\label{oiv.th.2.7}
Let $H$ be a real Hilbert space and a functional
${\Phi:H\rightarrow\mathbb{R}}$ is twice Fr\'{e}chet
differentiable at a point $y\in H$. Suppose that ${\Phi'(y)=0}$
and ${\Phi''(y)\gg 0}$ (${\Phi''(y)\ll 0}$, respectively). Then
$\Phi$ has a strong local minimum (strong local maximum,
respectively) at~$y$.
\end{theorem}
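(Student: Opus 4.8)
The plan is to reduce the statement to the second-order Taylor expansion of $\Phi$ at the stationary point $y$ together with the quantitative content of the hypothesis $\Phi''(y)\gg 0$. It suffices to treat the minimum case, since the maximum case follows at once by applying the result to $-\Phi$. So first I would record the Taylor formula: twice Fr\'{e}chet differentiability of $\Phi$ at $y$ yields, as $h\to 0$,
$$
\Phi(y+h)=\Phi(y)+\langle\Phi'(y),h\rangle+\tfrac12\langle\Phi''(y)h,h\rangle+o(\|h\|^{2}).
$$
To keep the argument self-contained I would derive this from differentiability of $\Phi'$ at $y$: writing $\Phi'(y+th)=\Phi'(y)+t\,\Phi''(y)h+\rho(th)$ with $\|\rho(u)\|=o(\|u\|)$, and integrating the scalar function $t\mapsto\langle\Phi'(y+th),h\rangle$ over $[0,1]$, the remainder is controlled by $\|h\|\int_{0}^{1}\|\rho(th)\|\,dt$, which is $o(\|h\|^{2})$ by the elementary scaling estimate $\|\rho(th)\|\le\varepsilon t\|h\|$ for $\|h\|$ small.

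Next I would substitute $\Phi'(y)=0$ and invoke positive definiteness. By the meaning of $\Phi''(y)\gg 0$ there is a constant $c>0$ such that $\langle\Phi''(y)h,h\rangle\ge c\|h\|^{2}$ for all $h\in H$. Hence
$$
\Phi(y+h)-\Phi(y)\ \ge\ \tfrac{c}{2}\|h\|^{2}+r(h),\qquad r(h)=o(\|h\|^{2}).
$$
Choosing $\delta>0$ so small that $|r(h)|\le\tfrac{c}{4}\|h\|^{2}$ whenever $0<\|h\|<\delta$, I obtain $\Phi(y+h)-\Phi(y)\ge\tfrac{c}{4}\|h\|^{2}>0$ on that punctured ball, which is precisely the assertion that $y$ is a strong (strict) local minimum. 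Replacing $\Phi$ by $-\Phi$ then gives the strong local maximum under $\Phi''(y)\ll 0$.

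The one delicate point is the justification of the Peano-type remainder in the Taylor formula from differentiability of $\Phi'$ at the single point $y$ rather than on a whole neighborhood: one must check that the integral representation $\Phi(y+h)-\Phi(y)=\int_{0}^{1}\langle\Phi'(y+th),h\rangle\,dt$ is legitimate (it is, since twice Fr\'{e}chet differentiability at $y$ presupposes that $\Phi$ is continuously Fr\'{e}chet differentiable near $y$) and that the little-$o$ estimate, though not uniform in the direction of $h$, still integrates to $o(\|h\|^{2})$ via the scaling bound above. Everything else is routine; in fact the expansion may simply be quoted from \cite{38}.
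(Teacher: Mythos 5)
The paper gives no proof of this statement at all: it is quoted as a classical result from Cartan's textbook (reference \cite{38}, Ch.~I, Th.~8.3.3), so your argument is being compared against the standard textbook proof rather than anything in the paper. Your proposal is essentially that standard proof and is correct in substance: the second-order Peano expansion at the stationary point, plus the uniform lower bound $\langle\Phi''(y)h,h\rangle\ge c\|h\|^{2}$ coming from the meaning of $\gg 0$ (which you rightly use --- mere pointwise positivity would not suffice in infinite dimensions), gives $\Phi(y+h)-\Phi(y)\ge\frac{c}{4}\|h\|^{2}$ near $y$, and the maximum case follows by passing to $-\Phi$.

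The only inaccuracy is in your justification of the integral representation: twice Fr\'{e}chet differentiability at $y$ does \emph{not} presuppose that $\Phi'$ is continuous on a neighborhood of $y$; it only requires that $\Phi'$ exist near $y$ and be differentiable (hence continuous) at the single point $y$. Your route can still be salvaged --- for fixed small $h$ the scalar function $\psi(t)=\Phi(y+th)$ is differentiable on $[0,1]$ with bounded derivative, hence Lipschitz, hence absolutely continuous, so $\psi(1)-\psi(0)=\int_{0}^{1}\psi'(t)\,dt$ holds with the Lebesgue integral --- but the cleaner and more standard device is to avoid integration altogether: apply the mean value inequality to $g(h)=\Phi(y+h)-\Phi(y)-\langle\Phi'(y),h\rangle-\frac12\langle\Phi''(y)h,h\rangle$, whose derivative $g'(h)=\Phi'(y+h)-\Phi'(y)-\Phi''(y)h$ is $o(\|h\|)$ by definition of $\Phi''(y)$, giving $|g(h)|\le\sup_{0\le t\le1}\|g'(th)\|\,\|h\|=o(\|h\|^{2})$ directly. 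With that repair your proof is complete and matches the classical argument the paper implicitly relies on.
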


Since, in view of Young theorem~(\cite{38},~Ch. I,~Th. 5.1.1),
bilinear form $\Phi''(y)$ is symmetric then it is associated with
self-adjoint linear operator, namely Hessian
${\mathcal{H}_n(\Phi)=(\partial_{ij}\Phi(y))_{i,j=1}^{n}}$.
Applying to $\mathcal{H}$ the positive definiteness criterion
(Theorem~\ref{Th 8.5}) together with Theorem~\ref{oiv.th.2.7}
immediately leads to the following sufficient extrema condition.

\begin{theorem}\label{Th 8.8}
Let $H_{i}$ ${(i=\overline{1,n})}$ be real separable Hilbert
spaces, ${H=H_{1} \times \ldots \times H_{n}}$, and a functional
${\Phi:H\rightarrow\mathbb{R}}$ is twice Fr\'{e}chet
differentiable at a point ${y=(y_{1},\ldots,y_{n})\in H}$. Suppose
that
\begin{enumerate}
 \item [i)] ${\partial_{1} \Phi(y)=\partial_{2} \Phi(y)=\ldots=\partial_{n} \Phi(y)=0}$;

 \item [ii)] ${\bigtriangleup^{i_{m}}_{j_{m}}\ldots
 \bigtriangleup^{i_{2}}_{j_{2}}\bigtriangleup^{i_{1}}_{j_{1}}\mathcal{H}_{n}(\Phi)(y)\gg
 0}$ \quad for ${i_{l},j_{l}=1,2}$ \,\,${(l=\overline{1,m})}$, \,\,\,${m=[\log_2
 n]_+}$\,\,.
\end{enumerate}
Then $\Phi$ has a strong local minimum at $y$.
\end{theorem}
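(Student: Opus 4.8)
The plan is to assemble Theorem~\ref{Th 8.8} directly from the three ingredients already available: the classical sufficient extremum condition (Theorem~\ref{oiv.th.2.7}), the quadratic-form version of the positive definiteness criterion (Theorem~\ref{Th 8.5}), and the symmetry of the second Fr\'echet differential via Young's theorem. First I would note that since $\Phi$ is twice Fr\'echet differentiable at $y$, the second differential $\Phi''(y)$ is a continuous symmetric bilinear form on $H=H_1\times\ldots\times H_n$; by Young's theorem its associated self-adjoint linear continuous operator is precisely the Hessian $\mathcal{H}_n(\Phi)(y)=(\partial_{ij}\Phi(y))_{i,j=1}^n$, so that $\Phi''(y)(h,h)=\langle \mathcal{H}_n(\Phi)(y)\,h,h\rangle$ for all $h\in H$.

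Next I would observe that hypothesis~$i)$ is exactly the condition $\Phi'(y)=0$, rewritten coordinatewise: the Fr\'echet derivative $\Phi'(y)$ vanishes if and only if each partial derivative $\partial_i\Phi(y)$ vanishes, since $\Phi'(y)h=\sum_{i=1}^n\partial_i\Phi(y)h_i$ for $h=(h_1,\ldots,h_n)$. Then I would apply Theorem~\ref{Th 8.5} to the quadratic form $\varphi(h)=\langle \mathcal{H}_n(\Phi)(y)\,h,h\rangle$: hypothesis~$ii)$ says precisely that the system of positive definiteness inequalities~\eqref{boz_e8.1}--\eqref{boz_e8.2} holds for $B_n=\mathcal{H}_n(\Phi)(y)$ (recall $m=[\log_2 n]_+$ by the remark following Theorem~\ref{Th 8.2}), hence $\varphi$ is positive definite on $H$, i.e. $\Phi''(y)\gg 0$. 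Finally, invoking Theorem~\ref{oiv.th.2.7} with $\Phi'(y)=0$ and $\Phi''(y)\gg 0$ yields that $\Phi$ has a strong local minimum at $y$, which is the assertion.

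There is essentially no obstacle here: the theorem is a straightforward corollary obtained by chaining the earlier results, and the only points requiring a line of justification are the identification of $\Phi''(y)$ with the Hessian (already flagged in the text via Young's theorem) and the translation of the vanishing-gradient hypothesis into the single condition $\Phi'(y)=0$. If one wanted to be careful, the mildest subtlety is ensuring that the continuous invertibility of the diagonal blocks $B_n^{11},B_n^{22}$ needed to even define the Schur operators $\bigtriangleup^{i}_{j}$ in~$ii)$ is guaranteed; but this is automatic, since condition~$ii)$ presupposes (through the recursive construction in Definition~\ref{Defin 8.1} and Theorem~\ref{Th 8.2}) the positive definiteness of the relevant principal blocks at each stage, which forces their invertibility. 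Thus the proof is simply: $ii)\Rightarrow$ (Theorem~\ref{Th 8.5}) $\Rightarrow \Phi''(y)\gg 0$, combined with $i)\Leftrightarrow\Phi'(y)=0$, combined with Theorem~\ref{oiv.th.2.7}. $\Box$
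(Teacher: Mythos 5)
Your proposal is correct and follows essentially the same route as the paper: the paper obtains Theorem~\ref{Th 8.8} exactly by identifying $\Phi''(y)$ with the self-adjoint Hessian $\mathcal{H}_n(\Phi)(y)$ via Young's theorem and then combining the quadratic-form positive definiteness criterion (Theorem~\ref{Th 8.5}) with the classical sufficient condition (Theorem~\ref{oiv.th.2.7}). Your additional remarks on rewriting $\Phi'(y)=0$ coordinatewise and on the invertibility of the blocks being built into condition~ii) only make explicit what the paper leaves implicit.
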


Remind that by~\eqref{boz_e8.3}, the precise number of
inequalities in the condition (ii) is
${V_{n}=2^{k}\cdot(3n-2^{k+1})}$ as ${2^{k}\leq n \leq 2^{k+1}}$
and ${n^{2}\leq V_{n} \leq (n+1)^{2}}$. Let's distinguish the
important in practice cases of two and three variables.

\begin{theorem}\label{oiv.th.2.11}
Let a functional ${\Phi:H_1\times H_2\rightarrow\mathbb{R}}$ be
twice Fr\'{e}chet differentiable at a point ${(y_1, y_2)\in
H_1\times H_2}$. Suppose that
\begin{enumerate}
\item [i)] ${\partial_{1}\Phi(y)=0; \quad\partial_{2}\Phi(y)=0}$;

\item [ii)] ${\partial_{11}\Phi(y)\gg 0}$;
           ${\partial_{22}\Phi(y)\gg 0}$;

\item [iii)]
${\bigtriangleup_{2}^{1}=(\partial_{11}\Phi-\partial_{12}\Phi\cdot
           \partial_{22}^{-1}\Phi\cdot \partial_{21}\Phi)(y)\gg
           0}$;
           \\ ${\bigtriangleup_{1}^{2}=(\partial_{22}\Phi-\partial_{21}\Phi\cdot
           \partial_{11}^{-1}\Phi\cdot \partial_{12}\Phi)(y)\gg 0}$.
\end{enumerate}
Then $\Phi$ has a strong local minimum at ${y}$.
\end{theorem}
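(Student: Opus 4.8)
The plan is to read Theorem~\ref{oiv.th.2.11} as the $n=2$ specialization of Theorem~\ref{Th 8.8}, and to obtain it by combining the $2\times 2$ Schur-complement criterion (Theorem~\ref{oiv.lem.2.8}) with the classical sufficient condition for a local extremum (Theorem~\ref{oiv.th.2.7}). In other words, I would show that hypotheses (i)--(iii) force $\Phi'(y)=0$ and $\Phi''(y)\gg 0$, and then quote Theorem~\ref{oiv.th.2.7}.

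First I would make the standard identifications on the product space $H=H_1\times H_2$. The Fréchet derivative of a functional on a product decomposes along the factors, so condition (i), $\partial_1\Phi(y)=\partial_2\Phi(y)=0$, is exactly $\Phi'(y)=0$. The second Fréchet derivative $\Phi''(y)$, viewed as a continuous bilinear form on $H$, is symmetric by Young's theorem (\cite{38}, Ch.~I, Th.~5.1.1), hence is associated with the self-adjoint operator matrix $\mathcal{H}_2(\Phi)(y)=(\partial_{ij}\Phi(y))_{i,j=1}^2$; in particular $\partial_{12}\Phi(y)=(\partial_{21}\Phi(y))^{*}$, so $\mathcal{H}_2(\Phi)(y)$ is a self-adjoint continuous operator in $H_1\times H_2$ in the precise sense required by Theorem~\ref{oiv.lem.2.8}. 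Moreover condition (ii), $\partial_{11}\Phi(y)\gg 0$ and $\partial_{22}\Phi(y)\gg 0$, guarantees that $\partial_{11}\Phi(y)$ and $\partial_{22}\Phi(y)$ are continuously invertible, so the Schur complements $\bigtriangleup_2^1$ and $\bigtriangleup_1^2$ in condition (iii) are well defined and self-adjoint.

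Then I would apply Theorem~\ref{oiv.lem.2.8} to $B_2=\mathcal{H}_2(\Phi)(y)$: its hypothesis i) is precisely our (ii), and its hypothesis ii) is precisely our (iii), so the theorem yields $\mathcal{H}_2(\Phi)(y)\gg 0$, that is, $\Phi''(y)\gg 0$. With $\Phi'(y)=0$ and $\Phi''(y)\gg 0$ in hand, Theorem~\ref{oiv.th.2.7} gives a strong local minimum of $\Phi$ at $y$, which is the assertion. There is no serious obstacle here: the only thing requiring care is the bookkeeping that translates the componentwise data (vanishing partials, block positivity, invertibility of the diagonal blocks) into the intrinsic objects $\Phi'(y)$ and $\Phi''(y)$ on the product space, so that the two cited theorems apply verbatim; no new analytic estimate is involved.
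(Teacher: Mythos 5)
Your proposal is correct and follows essentially the same route as the paper: Theorem~\ref{oiv.th.2.11} is there obtained as the $n=2$ case of Theorem~\ref{Th 8.8}, which itself is exactly the combination of the Schur-complement positive definiteness criterion (Theorem~\ref{Th 8.5}, reducing to Theorem~\ref{oiv.lem.2.8} when $n=2$) with the classical sufficient condition of Theorem~\ref{oiv.th.2.7}, together with the same use of Young's theorem to identify $\Phi''(y)$ with the self-adjoint Hessian matrix. No discrepancy worth noting.
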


\begin{remark}
Note, first of all that, in case of the commuting second partial
derivatives $\partial_{ij}\Phi(y)$, conditions (ii)-(iii) take
classical form
$$
\partial_{11}\Phi(y)\gg 0;\quad (\partial_{11}\Phi\cdot
           \partial_{22}\Phi-\partial_{12}\Phi\cdot \partial_{21}\Phi)(y)\gg
           0,
$$
to within substitution ${1\leftrightarrow 2}$.

Next, in case of a strong local maximum, the \textit{all signs} in
the inequalities from (ii)-(iii) must be replaced.

At last, in~\cite{137} the result of Theorem~\ref{oiv.th.2.11} was
used to obtain sufficient conditions of the \textit{compact
extremum} (see def. in~(\cite{137},~Def. 2.1)) of variational
functional
$$
\Phi(y)=\int\limits_{a}^{b}f(x,y,y')dx\quad (y\in W_{2}^{1}
([a;b],H)).
$$
In the case of extremal ${y(\cdot)\in W_{2}^{2}}$ and ${f\in C^2
([a;b]\times H \times H)}$ these sufficient conditions of a strong
compact minimum take form
$$
\partial_{11} f(x,y,y')\gg 0;\quad \partial_{22} f(x,y,y')\gg 0;
$$
$$
(\partial_{11}f-\partial_{12}f\cdot
\partial_{22}^{-1}f\cdot
\partial_{21}f)(x,y,y')\gg 0; \quad (\partial_{22}f-\partial_{21}f\cdot
\partial_{11}^{-1}f\cdot
\partial_{12}f)(x,y,y')\gg 0
$$
for all ${x\in [a;b]}$. Let's emphasize that, in most eases
(see~\cite{50}) the compact extremum of $\Phi$ is not local one.
\end{remark}

In case of three variables, taking into account the result of
Example~\ref{Exam. 1}, Theorem~\ref{Th 8.8} takes form of

\begin{theorem}\label{th.2.12}
Let a functional ${\Phi:H_1\times H_2\times
H_3\rightarrow\mathbb{R}}$ be twice Fr\'{e}chet differentiable at
a point ${(y_1, y_2, y_3)\in H_1\times H_2\times H_3}$. Suppose
that
\begin{enumerate}
\item [i)] ${\partial_{1}\Phi(y)=0;\,\,\,\,
\partial_{2}\Phi(y)=0;\,\,\,\, \partial_{3}\Phi(y)=0}$;

\item [ii)] ${\partial_{11}\Phi(y)\gg 0}$;\,\,
           ${\partial_{22}\Phi(y)\gg 0}$;\,\,
           ${\partial_{33}\Phi(y)\gg 0}$;

\item [iii)]
          ${\bigtriangleup_{12}^2=(\partial_{22}\Phi-\partial_{21}\Phi\cdot
           \partial_{11}^{-1}\Phi\cdot \partial_{12}\Phi)(y)\gg 0}$; ${\bigtriangleup_{32}^2=(\partial_{22}\Phi-\partial_{23}\Phi\cdot
           \partial_{33}^{-1}\Phi\cdot \partial_{32}\Phi)(y)\gg 0}$;
\\
           ${\bigtriangleup_{13}^3=(\partial_{33}\Phi-\partial_{31}\Phi\cdot
           \partial_{11}^{-1}\Phi\cdot \partial_{13}\Phi)(y)\gg 0}$; ${\bigtriangleup_{23}^3=(\partial_{33}\Phi-\partial_{32}\Phi\cdot
           \partial_{22}^{-1}\Phi\cdot \partial_{23}\Phi)(y)\gg 0}$;

\item [iv)]
${\bigtriangleup_{123}^{23}=\bigtriangleup_{12}^2-\bigtriangleup_{13}^2\cdot
(\bigtriangleup_{13}^3)^{-1}\cdot \bigtriangleup_{12}^3\gg
0}$;\quad
${\bigtriangleup_{132}^{32}=\bigtriangleup_{13}^3-\bigtriangleup_{12}^3\cdot
(\bigtriangleup_{12}^2)^{-1}\cdot \bigtriangleup_{13}^2\gg 0}$;

\item [v)] ${\partial_{11}\Phi(y)\gg(\partial_{12}\Phi\cdot
           \partial_{22}^{-}\Phi\cdot \partial_{21}\Phi+\partial_{13}\Phi\cdot
           \partial_{32}^{-}\Phi\cdot \partial_{21}\Phi+\partial_{12}\Phi\cdot
           \partial_{23}^{-}\Phi\cdot \partial_{31}\Phi+}$\\ ${\quad\quad\quad\quad\quad
           +\partial_{13}\Phi\cdot
           \partial_{33}^{-}\Phi\cdot \partial_{31}\Phi)(y)}$.

Here we denote by $\partial_{ij}^{-}\Phi(y)$ corresponding
elements of the inverse matrix to
$(\partial_{ij}\Phi(y))_{i,j=2}^3$. In this case,

\item [vi)]
          ${\partial_{22}^{-}\Phi(y)=(\partial_{22}^{-1}\Phi\cdot(I_{22}+\partial_{23}\Phi\cdot
           (\bigtriangleup_{23}^3)^{-1}\cdot\partial_{32}\Phi
           \cdot\partial_{22}^{-1}\Phi))(y)}$;
\\
           ${\partial_{33}^{-}\Phi(y)=(\partial_{33}^{-1}\Phi\cdot(I_{33}+\partial_{32}\Phi\cdot
           (\bigtriangleup_{32}^2)^{-1}\cdot\partial_{23}\Phi
           \cdot\partial_{33}^{-1}\Phi))(y)}$;
\\
           ${\partial_{23}^{-}\Phi(y)=-((\bigtriangleup_{32}^2)^{-1}\cdot\partial_{23}\Phi
           \cdot\partial_{33}^{-1}\Phi)(y)}$;
\\
           ${\partial_{32}^{-}\Phi(y)=-((\bigtriangleup_{23}^3)^{-1}\cdot\partial_{32}\Phi
           \cdot\partial_{22}^{-1}\Phi)(y)}$.

\end{enumerate}
Then $\Phi$ has a strong local minimum at ${y}$.
\end{theorem}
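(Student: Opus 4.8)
The plan is to derive Theorem~\ref{th.2.12} as a direct corollary of the sufficient extremum criterion of Theorem~\ref{Th 8.8} together with the explicit computation of the positive-definiteness system carried out in Example~\ref{Exam. 1}. First I would observe that, since $\Phi$ is twice Fr\'echet differentiable at $y=(y_1,y_2,y_3)$, Young's theorem guarantees that the second differential $\Phi''(y)$ is a symmetric continuous bilinear form on $H=H_1\times H_2\times H_3$, hence is associated with the self-adjoint continuous Hessian operator matrix $\mathcal{H}_3(\Phi)(y)=(\partial_{ij}\Phi(y))_{i,j=1}^3$. Thus the hypotheses of Theorem~\ref{Th 8.8} with $n=3$ are applicable, and it remains only to check that condition (ii) of that theorem — the system $\bigtriangleup^{i_m}_{j_m}\cdots\bigtriangleup^{i_1}_{j_1}\mathcal{H}_3(\Phi)(y)\gg 0$ with $m=[\log_2 3]_+=2$ — is literally the same as the union of conditions (ii)--(vi) here.

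The key step is the translation dictionary. By Example~\ref{Exam. 1}, with $B_3=\mathcal{H}_3(\Phi)(y)$ and $B_{ij}=\partial_{ij}\Phi(y)$, the system~\eqref{boz_e8.1} for $n=3$ consists of exactly $V_3=2^1\cdot(9-4)=10$ inequalities: the three diagonal positivity conditions $B_{ii}\gg 0$; the four first-level Schur complement conditions $\bigtriangleup^2_{12}\gg 0$, $\bigtriangleup^2_{32}\gg 0$, $\bigtriangleup^3_{13}\gg 0$, $\bigtriangleup^3_{23}\gg 0$; the two second-level conditions~\eqref{boz_e8.10}, i.e. $\bigtriangleup^{23}_{123}=\bigtriangleup^2_{12}-\bigtriangleup^2_{13}(\bigtriangleup^3_{13})^{-1}\bigtriangleup^3_{12}\gg 0$ and its $1\leftrightarrow$ twin $\bigtriangleup^{32}_{132}\gg 0$; and the last inequality $B_{11}\gg B_{12}B_{22}^-B_{21}+B_{13}B_{32}^-B_{21}+B_{12}B_{23}^-B_{31}+B_{13}B_{33}^-B_{31}$, in which $B^-_{ij}$ are the entries of the inverse of the $2\times 2$ block $(B_{ij})_{i,j=2}^3$ and are given by the four displayed formulas in Example~\ref{Exam. 1}. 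Rewriting each $B_{ij}$ as $\partial_{ij}\Phi(y)$ and each $\bigtriangleup$ in the partial-derivative notation yields precisely the conditions (ii) ($B_{ii}\gg 0$), (iii) (the four first-level complements, noting $\bigtriangleup^2_{12}=\partial_{22}\Phi-\partial_{21}\Phi\,\partial_{11}^{-1}\Phi\,\partial_{12}\Phi$ etc.), (iv) (the two second-level complements), (v) (the final inequality), and (vi) (the explicit formulas for $\partial^-_{ij}\Phi(y)$ inherited verbatim from Example~\ref{Exam. 1}). Hence (ii)--(vi) here are equivalent to $\mathcal{H}_3(\Phi)(y)\gg 0$.

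Therefore, combining: hypothesis (i) gives $\Phi'(y)=0$; hypotheses (ii)--(vi) together with Example~\ref{Exam. 1} give $\mathcal{H}_3(\Phi)(y)\gg 0$, i.e. $\Phi''(y)\gg 0$; and then Theorem~\ref{oiv.th.2.7} (or directly Theorem~\ref{Th 8.8} with $n=3$) yields that $\Phi$ has a strong local minimum at $y$. That completes the proof.

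The only genuinely delicate point — the main obstacle — is bookkeeping: one must make sure the generalized Schur complements $\bigtriangleup^i_{jk}$ of Example~\ref{Exam. 1} are consistently identified with the $\partial$-notation used in (iii), and in particular that the mixed complements $\bigtriangleup^2_{13}$, $\bigtriangleup^3_{12}$ appearing inside (iv) (which are \emph{not} among the positivity conditions of (iii) but are auxiliary operators) correspond to $\partial_{23}\Phi-\partial_{21}\Phi\,\partial_{11}^{-1}\Phi\,\partial_{13}\Phi$ and $\partial_{32}\Phi-\partial_{31}\Phi\,\partial_{11}^{-1}\Phi\,\partial_{12}\Phi$ respectively, so that the second-order complement $\bigtriangleup^{23}_{123}$ is well-defined under the already-assumed invertibility of $B_{11}$, $B_{22}$, $B_{33}$ and of $\bigtriangleup^3_{13}$ (the latter following from (iii), since a positive definite operator is continuously invertible). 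Once this identification is spelled out, the theorem is immediate and no further computation is needed.
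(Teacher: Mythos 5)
Your proposal is correct and follows essentially the same route as the paper: the authors likewise obtain Theorem~\ref{th.2.12} by specializing Theorem~\ref{Th 8.8} to $n=3$ and translating the system~\eqref{boz_e8.1} via the explicit computations of Example~\ref{Exam. 1}, with condition (i) supplying $\Phi'(y)=0$ and conditions (ii)--(vi) supplying $\mathcal{H}_3(\Phi)(y)\gg 0$. Your remark on identifying the auxiliary mixed complements $\bigtriangleup^2_{13}$, $\bigtriangleup^3_{12}$ and on their well-definedness is a welcome clarification of a step the paper leaves implicit, but it does not change the argument.
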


Pass to the necessary conditions. In analogous way, remind first a
classical second order necessary condition of a local extremum of
functional acting in Banach space~(\cite{38}, Ch.~I, Th.~8.2.1).
\begin{theorem}\label{oiv.th.2.5}
Let $E$ be a real Banach space and a functional
${\Phi:E\rightarrow\mathbb{R}}$ is twice Fr\'{e}chet
differentiable at a point ${y\in E}$ and has a local minimum at
$y$. Then not only ${\Phi'(y)=0}$ but also ${\Phi''(y)\geq 0}$.
\end{theorem}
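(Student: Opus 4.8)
The plan is to reduce both assertions to elementary one-dimensional facts by restricting $\Phi$ to rays through $y$. First I would fix an arbitrary direction $h\in E$ and introduce the scalar function $g(t)=\Phi(y+th)$ for real $t$ near $0$. Since $\Phi$ has a local minimum at $y$, the function $g$ has an interior local minimum at $t=0$; and since $\Phi$ is Fréchet differentiable at $y$, the chain rule gives that $g$ is differentiable at $0$ with $g'(0)=\Phi'(y)[h]$. The elementary fact that a differentiable function of one real variable attaining a local minimum at an interior point must have vanishing derivative there yields $\Phi'(y)[h]=0$. As $h\in E$ is arbitrary, $\Phi'(y)=0$ as an element of $E^{*}$, which is the first conclusion.

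For the second conclusion I would invoke the second-order Taylor expansion guaranteed by twice Fréchet differentiability at $y$, namely
$$
\Phi(y+h)=\Phi(y)+\Phi'(y)[h]+\tfrac{1}{2}\,\Phi''(y)[h,h]+o(\|h\|^{2})\qquad(\|h\|\to 0).
$$
Using the vanishing of $\Phi'(y)$ just established, together with the local-minimum inequality $\Phi(y+h)\ge\Phi(y)$ valid for all sufficiently small $\|h\|$, this reduces to
$$
\tfrac{1}{2}\,\Phi''(y)[h,h]+o(\|h\|^{2})\ge 0.
$$

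Now I would substitute $h\mapsto th$ with $t>0$ small, exploit the quadratic homogeneity $\Phi''(y)[th,th]=t^{2}\,\Phi''(y)[h,h]$, divide through by $t^{2}>0$, and let $t\to 0^{+}$, so that the remainder term $o(t^{2})/t^{2}$ drops out. This gives $\Phi''(y)[h,h]\ge 0$ for every $h\in E$, i.e.\ $\Phi''(y)\ge 0$. The only delicate point is the control of the remainder in this limit passage: one must verify that the $o(\|h\|^{2})$ term is genuinely little-o along each fixed ray, so that after division by $t^{2}$ and the limit $t\to 0^{+}$ it is annihilated. This is immediate from the definition of the Fréchet second differential, so no real obstacle arises — the argument is simply the Banach-space analogue of the classical second-derivative test, and neither separability nor any Hilbert structure is used.
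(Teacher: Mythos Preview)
Your argument is correct and is precisely the standard proof of this classical second-order necessary condition. Note, however, that the paper does not give its own proof of this statement: it is merely recalled as a known result with a reference to Cartan's textbook (\cite{38}, Ch.~I, Th.~8.2.1), so there is no ``paper's proof'' to compare against --- your write-up simply supplies what the paper leaves to the citation.
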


Quite analogously with the preceding case, with the help of
Theorems~\ref{Th 8.4} and~\ref{oiv.th.2.5} we obtain the following
second order necessary extremal condition.

\begin{theorem}\label{Th. 8.11}
Let $H_{i}$ ${(i=\overline{1,n})}$ be real separable Hilbert
spaces, ${H=H_{1} \times \ldots \times H_{n}}$, and a functional
${\Phi:H\rightarrow\mathbb{R}}$ is twice Fr\'{e}chet
differentiable and has a local minimum at a point
${y=(y_{1},\ldots,y_{n})\in H}$. Suppose also that the all
operators
$\widetilde{\bigtriangleup_{1}^{1}}(\widetilde{\bigtriangleup_{1}^{2}})^{k}
\mathcal{H}_{n}(\Phi)$ ${(k=\overline{0,n-2})}$ are continuously
invertible. Then not only ${\partial_i\Phi(y)=0}$
${(i=\overline{1,n})}$ but also the inequalities
$$
  \widetilde{\bigtriangleup_{1}^{1}}(\widetilde{\bigtriangleup_{1}^{2}})^{k}
  \mathcal{H}_{n}(\Phi)\geq
  0\,\,\,(k=\overline{0,n-2});\quad
  (\widetilde{\bigtriangleup_{1}^{2}})^{n-1}\mathcal{H}_{n}(\Phi)\geq 0
$$
are valid.
\end{theorem}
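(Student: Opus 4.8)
The plan is to combine the abstract necessary condition for a local minimum in a Banach space (Theorem~\ref{oiv.th.2.5}) with the nonnegativity criterion for self-adjoint operator matrices (Theorem~\ref{Th 8.4}), exactly in the same spirit as the sufficient conditions were derived from Theorem~\ref{Th 8.5} and Theorem~\ref{oiv.th.2.7}. First I would recall that, since $\Phi$ is twice Fréchet differentiable and has a local minimum at $y$, Theorem~\ref{oiv.th.2.5} (applied with $E=H$, which is in particular a real Banach space) yields $\Phi'(y)=0$ and $\Phi''(y)\ge 0$ as a bilinear form on $H$. By Young's theorem the bilinear form $\Phi''(y)$ is symmetric, so it is represented by the self-adjoint Hessian operator $\mathcal H_n(\Phi)=(\partial_{ij}\Phi(y))_{i,j=1}^n$, and nonnegativity of the form $\Phi''(y)$ is precisely nonnegativity of the operator $\mathcal H_n(\Phi)$.

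Next I would invoke Theorem~\ref{Th 8.4} with $B_n=\mathcal H_n(\Phi)$. The hypothesis of the present theorem is exactly that the operators $\widetilde{\bigtriangleup_{1}^{1}}(\widetilde{\bigtriangleup_{1}^{2}})^{k}\mathcal H_n(\Phi)$ for $k=\overline{0,n-2}$ are continuously invertible, which matches condition~\eqref{boz_e8.5}; self-adjointness of $\mathcal H_n(\Phi)$ supplies the remaining standing hypothesis. Since we have just established $\mathcal H_n(\Phi)\ge 0$, the ``only if'' direction of Theorem~\ref{Th 8.4} gives that the system of inequalities~\eqref{boz_e8.6} holds, i.e. $\widetilde{\bigtriangleup_{1}^{1}}(\widetilde{\bigtriangleup_{1}^{2}})^{k}\mathcal H_n(\Phi)\ge 0$ for $k=\overline{0,n-2}$ together with $(\widetilde{\bigtriangleup_{1}^{2}})^{n-1}\mathcal H_n(\Phi)\ge 0$. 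Combined with $\partial_i\Phi(y)=0$ from the first step, this is precisely the asserted conclusion.

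There is no serious obstacle here — the statement is a direct corollary, and the proof is a two-line chaining of cited results. The only point that needs a word of care is the logical matching between the abstract necessary condition (which a priori only guarantees $\Phi''(y)\ge 0$ as a form) and the operator formulation: one must note that $\Phi''(y)\ge 0$ exactly means $\langle\mathcal H_n(\Phi)h,h\rangle\ge 0$ for all $h\in H$, so that the hypothesis ``$\mathcal H_n(\Phi)$ nonnegative'' of Theorem~\ref{Th 8.4} is met, and then read off the equivalent system~\eqref{boz_e8.6}. I would therefore present the proof as follows.

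\begin{proof}
Since $\Phi$ is twice Fréchet differentiable and has a local minimum at $y$, Theorem~\ref{oiv.th.2.5} applied to the real Banach space $H$ gives $\Phi'(y)=0$, equivalently $\partial_i\Phi(y)=0$ $(i=\overline{1,n})$, and $\Phi''(y)\geq 0$ as a bilinear form on $H$. By Young's theorem the form $\Phi''(y)$ is symmetric, hence it is associated with the self-adjoint linear continuous Hessian operator $\mathcal H_n(\Phi)=(\partial_{ij}\Phi(y))_{i,j=1}^n$, and the inequality $\Phi''(y)\geq 0$ means exactly $\langle \mathcal H_n(\Phi)h,h\rangle\geq 0$ for all $h\in H$, i.e. $\mathcal H_n(\Phi)$ is nonnegative. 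By hypothesis the operators $\widetilde{\bigtriangleup_{1}^{1}}(\widetilde{\bigtriangleup_{1}^{2}})^{k}\mathcal H_n(\Phi)$ $(k=\overline{0,n-2})$ are continuously invertible, so condition~\eqref{boz_e8.5} is satisfied with $B_n=\mathcal H_n(\Phi)$; since $\mathcal H_n(\Phi)$ is self-adjoint, Theorem~\ref{Th 8.4} applies and yields that nonnegativity of $\mathcal H_n(\Phi)$ is equivalent to the system~\eqref{boz_e8.6}. In particular the inequalities
$$
\widetilde{\bigtriangleup_{1}^{1}}(\widetilde{\bigtriangleup_{1}^{2}})^{k}\mathcal H_n(\Phi)\geq 0\,\,\,(k=\overline{0,n-2});\qquad (\widetilde{\bigtriangleup_{1}^{2}})^{n-1}\mathcal H_n(\Phi)\geq 0
$$
hold, which together with $\partial_i\Phi(y)=0$ $(i=\overline{1,n})$ is the required conclusion.
\end{proof}
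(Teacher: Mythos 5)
Your proposal is correct and follows exactly the route the paper itself indicates: it combines the classical second order necessary condition (Theorem~\ref{oiv.th.2.5}) with the nonnegativity criterion of Theorem~\ref{Th 8.4} applied to the self-adjoint Hessian $\mathcal{H}_n(\Phi)$, using the hypothesis to verify the invertibility conditions~\eqref{boz_e8.5}. The careful remark that $\Phi''(y)\geq 0$ as a form is the same as nonnegativity of the operator $\mathcal{H}_n(\Phi)$ is exactly the matching the paper relies on, so nothing is missing.
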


In case of three variables Theorem~\ref{Th. 8.11}, using
Example~\ref{Exam. 1.2}, takes form of

\begin{theorem}\label{th.2.12.1}
Let a functional ${\Phi:H_1\times H_2\times
H_3\rightarrow\mathbb{R}}$ is twice Fr\'{e}chet differentiable at
a point ${y\in H_1\times H_2\times H_3}$ and has a local minimum
at $y$. Suppose also that the operators ${\partial_{11}\Phi(y)}$
and
${(\partial_{22}\Phi-\partial_{21}\Phi\cdot\partial_{11}^{-1}\Phi\cdot
\partial_{12}\Phi)(y)}$ are continuously
invertible. Then not only ${\partial_i\Phi(y)=0}$ ${(i=1,2,3)}$
but also the inequalities
\begin{enumerate}
\item [i)]  ${\partial_{11}\Phi(y)\geq 0}$;

\item [ii)] ${(\partial_{22}\Phi-\partial_{21}\Phi\cdot
           \partial_{11}^{-1}\Phi\cdot \partial_{12}\Phi)(y)\geq 0}$;

\item [iii)]
           ${(\partial_{33}\Phi-\partial_{31}\Phi\cdot
           \partial_{11}^{-1}\Phi\cdot \partial_{13}\Phi)(y)-
           (\partial_{32}\Phi-\partial_{31}\Phi\cdot
           \partial_{11}^{-1}\Phi\cdot \partial_{12}\Phi)(y)\times}$ \\
           ${\times(\partial_{22}\Phi-\partial_{21}\Phi\cdot
           \partial_{11}^{-1}\Phi\cdot \partial_{12}\Phi)^{-1}(y)\cdot
           (\partial_{23}\Phi-\partial_{21}\Phi\cdot
           \partial_{11}^{-1}\Phi\cdot \partial_{13}\Phi)(y)\geq 0}$.
\end{enumerate}
are valid.
\end{theorem}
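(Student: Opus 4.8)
The plan is to obtain this statement as the specialization to $n=3$ of the second-order necessary extremal condition in Theorem~\ref{Th. 8.11}, combined with the explicit computation of the second-kind Schur operators carried out in Example~\ref{Exam. 1.2}. So the argument will be short: the real content is already packaged in Theorems~\ref{oiv.th.2.5} and~\ref{Th 8.4}, and all that remains is bookkeeping.

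First I would invoke the classical necessary condition, Theorem~\ref{oiv.th.2.5}, applied to the real Banach (here in fact Hilbert) space $H=H_1\times H_2\times H_3$: since $\Phi$ is twice Fr\'{e}chet differentiable and has a local minimum at $y$, we get $\Phi'(y)=0$ and $\Phi''(y)\geq 0$. The vanishing of $\Phi'(y)$ unpacks componentwise to $\partial_i\Phi(y)=0$ for $i=1,2,3$. By Young's theorem the bilinear form $\Phi''(y)$ is symmetric, hence is represented by the self-adjoint Hessian operator $\mathcal{H}_3(\Phi)(y)=(\partial_{ij}\Phi(y))_{i,j=1}^3\in\mathcal{L}(H)$, and the inequality $\Phi''(y)\geq 0$ means precisely that $\mathcal{H}_3(\Phi)(y)\geq 0$ as a self-adjoint operator in $H$.

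Next I would verify that the two invertibility hypotheses stated here are exactly the hypotheses~\eqref{boz_e8.5} of Theorem~\ref{Th 8.4} (equivalently, of Theorem~\ref{Th. 8.11}) in the case $n=3$. Indeed, by Definition~\ref{Defin 8.2} one has $\widetilde{\bigtriangleup_{1}^{1}}\mathcal{H}_3(\Phi)=\partial_{11}\Phi(y)$ (the $k=0$ operator), while $\widetilde{\bigtriangleup_{1}^{1}}\widetilde{\bigtriangleup_{1}^{2}}\mathcal{H}_3(\Phi)$ (the $k=1$ operator) is the $(1,1)$-block of $\widetilde{\bigtriangleup_{1}^{2}}\mathcal{H}_3(\Phi)$, which by the computation in Example~\ref{Exam. 1.2} equals $\bigtriangleup_1^{22}=(\partial_{22}\Phi-\partial_{21}\Phi\cdot\partial_{11}^{-1}\Phi\cdot\partial_{12}\Phi)(y)$; these two operators are exactly the ones assumed continuously invertible. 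Hence Theorem~\ref{Th. 8.11} applies and yields $\widetilde{\bigtriangleup_{1}^{1}}(\widetilde{\bigtriangleup_{1}^{2}})^{k}\mathcal{H}_3(\Phi)\geq 0$ for $k=0,1$ together with $(\widetilde{\bigtriangleup_{1}^{2}})^{2}\mathcal{H}_3(\Phi)\geq 0$.

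Finally I would translate these three operator inequalities into the explicit form (i)--(iii) using Example~\ref{Exam. 1.2}: the $k=0$ inequality is $\partial_{11}\Phi(y)\geq 0$, i.e.~(i); the $k=1$ inequality is $\bigtriangleup_1^{22}\geq 0$, i.e.~(ii); and $(\widetilde{\bigtriangleup_{1}^{2}})^{2}\mathcal{H}_3(\Phi)=\bigtriangleup_1^{33}-\bigtriangleup_1^{32}\cdot(\bigtriangleup_1^{22})^{-1}\cdot\bigtriangleup_1^{23}$ with $\bigtriangleup_1^{ik}=(\partial_{ik}\Phi-\partial_{i1}\Phi\cdot\partial_{11}^{-1}\Phi\cdot\partial_{1k}\Phi)(y)$, which is precisely the left-hand side of (iii). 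I do not expect any genuine analytic obstacle here once Theorems~\ref{oiv.th.2.5} and~\ref{Th. 8.11} are available; the only care needed is the identification of the abstract second-kind Schur operators with the partial-derivative expressions (and noting that the relevant version of the conditions from Example~\ref{Exam. 1.2} is the nonnegativity one, $\geq 0$, matching~\eqref{boz_e8.6}, rather than the strict one displayed there).
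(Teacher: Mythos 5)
Your proposal is correct and follows essentially the same route as the paper, which presents Theorem~\ref{th.2.12.1} precisely as the $n=3$ specialization of Theorem~\ref{Th. 8.11} (itself a combination of Theorems~\ref{oiv.th.2.5} and~\ref{Th 8.4}) with the Schur operators written out via Example~\ref{Exam. 1.2}. Your remark that the nonnegativity version ($\geq 0$) of the conditions from Example~\ref{Exam. 1.2}, rather than the strict one displayed there, is the relevant one is a correct and useful clarification.
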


Concluding this item, let's consider an example of finding
functional extremum in case of three variables.

\begin{example}\label{Exam. 2.2}
Let ${H_i=l_2}$ ${(i=1,2,3)}$ and a functional ${\Phi:H_1\times
H_2\times H_3\rightarrow\mathbb{R}}$ has form
$$
\Phi(x,y,z)=\|x\|^2+\|y\|^2+\|z\|^2+(x_1^2+z_1^2+x_1x_2+y_1y_2+z_1z_2+x_1z_1).
$$
Then $\nabla_x\Phi=(4x_1+x_2+z_1, 2x_2+x_1, 2x_3, 2x_4, \ldots)$,
$\nabla_y\Phi=(2y_1+y_2, 2y_2+y_1, 2y_3, 2y_4, \ldots)$,
$\nabla_z\Phi=(4z_1+z_2+x_1, 2z_2+z_1, 2z_3, 2z_4, \ldots)$,
whence $\Phi$ has a critical point at zero and the second partial
derivatives are
$$
\frac{\partial^2 \Phi}{\partial x^2}=\frac{\partial^2
\Phi}{\partial z^2}= \left(\begin{array}{ccccccccc}
4 & 1 & 0 & 0 & \ldots & 0&\ldots\\
1 & 2 & 0 & 0 & \ldots & 0&\ldots\\
0 & 0 & 2 & 0 & \ldots & 0&\ldots\\
\vdots & \vdots & \vdots& \vdots &  & \vdots&\\
0 & 0 & 0 & 0 & \ldots & 2&\ldots\\
\vdots & \vdots & \vdots& \vdots &  & \vdots&
 \end{array} \right),
$$
 \begin{equation}\label{boz_e8.16}
 \frac{\partial^2 \Phi}{\partial y^2}= \left(\begin{array}{ccccccccc}
2 & 1 & 0 &  0 & \ldots & 0&\ldots\\
1 & 2 & 0 &  0 & \ldots & 0&\ldots\\
0 & 0 & 2 &  0 & \ldots & 0&\ldots\\
\vdots &  \vdots & \vdots& \vdots &  & \vdots&\\
0 & 0 & 0 &  0 & \ldots & 2&\ldots\\
\vdots  & \vdots & \vdots& \vdots &  & \vdots&
 \end{array} \right),\,\,\,
 \frac{\partial^2 \Phi}{\partial x \partial z}=
 \frac{\partial^2 \Phi}{\partial z\partial x}=
 \left(\begin{array}{ccccccccc}
1 & 0 &  0 & \ldots & 0&\ldots\\
0 & 0 &  0 & \ldots & 0&\ldots\\
\vdots &  \vdots& \vdots &  & \vdots&\\
0 & 0 & 0  & \ldots & 0&\ldots\\
\vdots & \vdots &  \vdots &  & \vdots&
 \end{array} \right),
\end{equation}
 $$
 \partial^2 \Phi/\partial x \partial y=
 \partial^2 \Phi/\partial y\partial x=
 \partial^2 \Phi/\partial y\partial z=
 \partial^2 \Phi/\partial z\partial x=0.
 $$
 Hence, the Hessian $\mathcal{H}_3(\Phi)$ satisfies conditions of
 Example~\ref{Exam. 2}~(2) and therefore we can restrict oneself
 by testing only five positive definiteness conditions. Note also
 that ${(\partial^2 \Phi/\partial x^2)}$ and ${(\partial^2 \Phi/\partial
 y^2)}$ not commute. So,

 1) It follows immediately from~\eqref{boz_e8.16} that
 $$
\frac{\partial^2 \Phi}{\partial x^2}\gg 0,\quad \frac{\partial^2
\Phi}{\partial y^2}\gg 0, \quad \frac{\partial^2 \Phi}{\partial
z^2}\gg 0.
 $$

 2) The inverse matrix to ${(\partial^2 \Phi/\partial x^2)}$ also
 can be easily calculated from~\eqref{boz_e8.16}:
 $$
\left(\frac{\partial^2 \Phi}{\partial
x^2}\right)^{-1}=\left(\frac{\partial^2 \Phi}{\partial
z^2}\right)^{-1}= \left(\begin{array}{ccccccccc}
2/7 & -1/7 & 0 & 0 & \ldots & 0&\ldots\\
-1/7 & 4/7 & 0 & 0 & \ldots & 0&\ldots\\
0 & 0 & 1/2 & 0 & \ldots & 0&\ldots\\
\vdots & \vdots & \vdots& \vdots &  & \vdots&\\
0 & 0 & 0 & 0 & \ldots & 1/2&\ldots\\
\vdots & \vdots & \vdots& \vdots &  & \vdots&
 \end{array} \right).
$$

3) Hence, a direct calculation shows that the matrix
$$
\frac{\partial^2 \Phi}{\partial x^2}- \frac{\partial^2
\Phi}{\partial x \partial z}\cdot \left(\frac{\partial^2
\Phi}{\partial z^2}\right)^{-1}\cdot\frac{\partial^2
\Phi}{\partial z
\partial x}=
\frac{\partial^2 \Phi}{\partial z^2}- \frac{\partial^2
\Phi}{\partial z \partial x}\cdot \left(\frac{\partial^2
\Phi}{\partial x^2}\right)^{-1}\cdot\frac{\partial^2
\Phi}{\partial x
\partial z}=
 $$
 $$
= \left(\begin{array}{ccccccccc}
26/7 & 1 & 0 & 0 & \ldots & 0&\ldots\\
1 & 2 & 0 & 0 & \ldots & 0&\ldots\\
0 & 0 & 2 & 0 & \ldots & 0&\ldots\\
\vdots & \vdots & \vdots& \vdots &  & \vdots&\\
0 & 0 & 0 & 0 & \ldots & 2&\ldots\\
\vdots & \vdots & \vdots& \vdots &  & \vdots&
 \end{array} \right)
 $$
 is positive definite. So, $\Phi$ has a strong local minimum at
 zero.
\end{example}

\bigskip
\rightline{Department of Algebra and Functional Analysis}
\rightline{Taurida National V.I. Vernadskii University}
\rightline{Vernadskii ave., 4,} \rightline{Simferopol 95007}
\rightline{Ukraine} \rightline{E-mail: old@crimea.edu}
\rightline{katboz@mail.ru}


\begin{thebibliography}{99}
\bibitem{1} \textsc{M.G. Krein}, \textit{The theory of self-adjoint extensions of semi-bounded
Hermitian transformations and its applications}, Mat. Sbornik,
(1947). Part I: \textbf{20} (1947), 431--490; Part II: \textbf{21}
(1947), 366--404. (In Russian).

\bibitem{2} \textsc{Y.L. Shmul’yan}, \textit{Hellinger operator integral},
 Mat. Sbornik, \textbf{49(91)}, No. 4 (1959), 381--430. (In Russian)

\bibitem{4} \textsc{V.P. Potapov}, \textit{The multiplicative structure of J--contractive
matrix functions}, Trudy Moskov. Math. Obsc., \textbf{4 }(1955),
125--236. (In Russian). English translation: Am. Math. Soc.
Transl., \textbf{15}, (1960), 131--243.

\bibitem{11}\textsc{ T.Ya. Azizov and I.S. Iohvidov}, \textit{Theory of  Linear Operators
in Spaces with an Indefinite Metric}, Nauka, Мoskow, 1986. (In
Russian)

\bibitem{40} \textsc{N.D. Kopachevskii, S.G. Krein and Ngo Zuy Kan}, {\it Operator methods
in linear hydrodynamics}, Nauka, Moscow, 1989. (In Russian)

\bibitem{9} \textsc{N.D. Kopachevsky and S. G. Krein},
\textit{Operator approach to linear problems of hydrodynamics},
Vol. 1: Self-adjoint problems for an ideal fluid, Operator Theory
Adv. and Appl., Birkh\"{a}user Verlag, Basel–Boston–Berlin,
\textbf{128} (2001).

\bibitem{10} \textsc{N.D. Kopachevsky and S. G. Krein}, \textit{Operator approach to linear
problems of hydrodynamics}, Vol. 2: Nonself-adjoint problems for
viscous fluids, Operator Theory Adv. and Appl., Birkh\"{a}user
Verlag, Basel–Boston–Berlin, \textbf{146 }(2003).

\bibitem{7} \textsc{J. Mandel}, \textit{On block diagonal and Schur complement
preconditioning},  Numerische Mathematik, Springer, Berlin /
Heidelberg, \textbf{58}, No. 1 (1990), 79--93.


\bibitem{5} \textsc{I. Schur}, \textit{\"{U}ber Potenzreihen, die im Innern des Einheitskreises
beschr\"{a}nkt sind}, J. reine u. angew. Math., (1917). Teil I:
\textbf{147} (1917), 205--232; Teil II: \textbf{148} (1918),
122--145. English translation in I. Gohberg (ed.), I. Schur
methods in operator theory and signal processing, Birkh\"{a}user,
(1986), 31--59 and 36--88.

\bibitem{6} \textsc{A. Bultheel and K. M\"{u}ller},
\textit{On several aspects of $J$--inner functions in Schur
analysis}, Bull. Belg. Math. Soc. Simon Stevin, \textbf{5}, No. 5
(1998), 603--648.

\bibitem{13} \textsc{Y. Arlinskii}, \textit{Conservative realizations of the
functions associated with Schur’s algorithm for the Schur class
operator–valued function}, Operators and Matrices, \textbf{3}, No.
1 (2009), 59-–96.

\bibitem{123}  \textsc{S. Hassi, M. Malamud and H. de Snoo}, \textit{On Krein's extension
theory of nonnegative operators},  Math. Nachr. \textbf{274--275}
(2004), 40--73.

\bibitem{8} \textsc{T.Ya. Azizov and V.A. Khatskevich},
\textit{Bistrict plus-operators and frac- tional linear operator
mappings}, Ukrainian Mathematical Bulletin, \textbf{4}, No. 3
(2007), 311–332. (In Russian)

\bibitem{38} \textsc{A.~Kartan}, {\it Differential calculus. Differential forms}, Mir,
Moscow, 1971. (In Russian)

\bibitem{137} \textsc{I.V. Orlov},
 \textit{Compact extrema: general theory and its applications
to the variational functionals}, Operator Theory: Advances \&
Applications, Birkh\"{a}user Verlag, Basel/Switzerland,
\textbf{190} (2009), 397–417

\bibitem{50}
\textsc{E.V. Bozhonok}, \textit{Compact extremum and compact
analytical properties of basic variational functional in Sobolev
space $W_2^1$}, Manuscript: The dissertations for obtaining
scientific degree of candidate of physical and mathematical
sciences by speciality 01.01.02. Simferopol, 2008.


\end{thebibliography}
\end{document}